\newtheorem{theorem}{Theorem}
\newtheorem{lemma}{Lemma}
\newtheorem{proposition}{Proposition}
\newtheorem{definition}{Definition}
\newtheorem{remark}{Remark}
\numberwithin{equation}{section}
\begin{document}
\title[Rigidity of umbilical hypersurfaces]{Weighted Hsiung-Minkowski formulas and \\ rigidity of umbilical hypersurfaces}

\author{Kwok-Kun Kwong}
\address{Department of Mathematics, National Cheng Kung University, Tainan City 701, Taiwan}
\email{kwong@mail.ncku.edu.tw}

\author{Hojoo Lee}
\address{Center for Mathematical Challenges, Korea Institute for Advanced Study, Seoul 02455, Korea}
\email{momentmaplee@gmail.com}

\author{Juncheol Pyo}
\address{Department of Mathematics, Pusan National University, Busan 46241, Korea}
\email{jcpyo@pusan.ac.kr}

\begin{abstract}
We use the weighted Hsiung-Minkowski integral formulas and Brendle's inequality to show new rigidity results. First, we prove Alexandrov type results for closed embedded hypersurfaces with radially symmetric higher order mean curvature in a large class of Riemannian warped product manifolds, including the Schwarzschild and Reissner-Nordstr\"{o}m spaces, where the Alexandrov reflection principle is not available. Second, we prove that, in Euclidean space, the only closed immersed self-expanding solitons to the weighted generalized inverse curvature flow of codimension one are round hyperspheres.
\end{abstract}

\subjclass[2010]{{53C44}, {53C42}}
\keywords{Embedded hypersurfaces, higher order mean curvatures, integral inequalities}

\maketitle

\section{Motivation and Main Results}

A. D. Alexandrov \cite{Ale1956, Ale1962} proved that the only closed hypersurfaces of constant
(higher order) mean curvature embedded in ${\mathbb{R}}^{n \geq 3}$ are round hyperspheres.
The embeddedness assumption is essential. For instance, ${\mathbb{R}}^{3}$ admits immersed tori with constant mean curvature, constructed by U. Abresch \cite{Abr1987} and
H. Wente \cite{Wen1986}. R. C. Reilly \cite{Rei1977} and A. Ros \cite{Ros1987, Ros1988} presented alternative proofs, employing
the Hsiung-Minkowski formula. See also Osserman's wonderful survey \cite {Oss1990}.

In 1999, S. Montiel \cite{Mon1999} established various general rigidity results in a class of warped product manifolds, including
the Schwarzschild manifolds and Gaussian spaces. Some of his results require the additional assumption that the closed hypersurfaces are star-shaped
with respect to the conformal vector field induced from the ambient warped product structure.
As a corollary \cite[Example 5]{Mon1999}, he also recovers Huisken's theorem \cite{Hui1990} that the closed, star-shaped,
self-shrinking hypersurfaces to the mean curvature flow in ${\mathbb{R}}^{n \geq 3}$ are round hyperspheres. In 2016, S. Brendle \cite{B2016} solved the
open problem that, in ${\mathbb{R}}^{3}$, closed, embedded, self-shrinking topological spheres to the mean curvature flow should be round. The embeddedness assumption is essential. Indeed, in 2015, G. Drugan \cite{Dru2015} employed the shooting method to prove the existence of a self-shrinking sphere with self-intersections in ${\mathbb{R}}^{3}$.

In 2001, H. Bray and F. Morgan \cite{BM2002} proved a general isoperimetric comparison theorem in a class of warped product spaces, including Schwarzschild manifolds.
In 2013, S. Brendle \cite{B2013} showed that Alexandrov Theorem holds in a class of sub-static warped product spaces, including Schwarzschild and Reissner-Nordstr\"{o}m manifolds. S. Brendle and M. Eichmair \cite{BE2013} extended Brendle's result to the closed, convex, star-shaped hypersurfaces with constant higher order mean curvature. See also
\cite{Gim2015} by V. Gimeno, \cite{LX2016} by J. Li and C. Xia, and \cite{WW2016} by X. Wang and Y.-K. Wang.

In this paper, we provide new rigidity results (Theorem \ref{first main theorem}, \ref{second main theorem} and \ref{third main theorem}). First, we associate the manifold $M^{n \geq 3} = \left( {N}^{n-1} \times [0,\bar{r}), \bar{g} = dr^2 + h(r)^2 \, g_{N} \right)$, where $(N^{n-1}, g_N)$ is a compact manifold with constant curvature $K$.
As in \cite{B2013, BE2013}, we consider four conditions on the warping function $h: [0,\bar{r}) \to [0, \infty)$:
\begin{itemize}
\item[\textbf{(H1)}]\label{h1} $h'(0) = 0$ and $h''(0) > 0$.
\item[\textbf{(H2)}]\label{h2} $h'(r) > 0$ for all $r \in (0,\bar{r})$.
\item[\textbf{(H3)}]\label{h3} $2 \, \frac{h''(r)}{h(r)} - (n-2) \, \frac{K - h'(r)^2}{h(r)^2}$ is monotone increasing for $r \in (0,\bar{r})$.
\item[\textbf{(H4)}]\label{h4} For all $r \in (0,\bar{r})$, we have
$\frac{h''(r)}{h(r)} + \frac{K-h'(r)^2}{h(r)^2} > 0$.
\end{itemize}

Examples of ambient spaces satisfying all the conditions include the classical Schwarzschild and Reissner-Nordstr\"{o}m manifolds \cite[Section 5]{B2013}.

\begin{theorem}\label{first main theorem}
Let $\Sigma$ be a closed $k$-convex ($H_k>0$) hypersurface embedded in ${M}^{n \geq 3}$.
Let $\{b_j(r)\}_{j=1}^k$ and $\{c_j(r)\}_{j=1}^k$ be two families of monotone increasing, smooth, non-negative functions. Suppose
$$
\sum_{j=1}^{k} \left(b_j(r) H_j +c_j(r)H_1 H_{j-1}\right)=\eta(r)
$$
for some smooth positive radially symmetric function $\eta(r)$
which is monotone decreasing in $r$.
\begin{enumerate}
\item \label{item 1}
$k=1:$ Assume (H1), (H2), (H3). Then $\Sigma$ is umbilical.
\item $k \in \{2, \cdots, n-1\}:$ Assume (H1), (H2), (H3), (H4). If $\Sigma$ is star-shaped, then it is a slice ${N}^{n-1} \times \left\{r_{0} \right\}$ for some constant $r_{0}$.
\end{enumerate}
\end{theorem}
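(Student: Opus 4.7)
My plan combines three ingredients: the weighted Hsiung--Minkowski identities for the conformal vector field $X=h(r)\partial_r$; Brendle's Heintze--Karcher inequality (under (H1)--(H3)); and the Newton--MacLaurin inequality $H_1 H_{j-1}\geq H_j$ for $k$-convex hypersurfaces. Integrating $\operatorname{div}_\Sigma\bigl(\phi(r)\,T_{j-1}(X^\top)\bigr)$ on $\Sigma$, where $T_{j-1}$ is the $(j-1)$-th Newton tensor, yields the weighted Minkowski identity
\[
\int_\Sigma \phi(r)\bigl(h'(r)H_{j-1}-H_j\langle X,\nu\rangle\bigr)\,d\mu \;=\; -\,C_{n,j}\int_\Sigma \phi'(r)\,h(r)\,\langle T_{j-1}(\nabla^\Sigma r),\nabla^\Sigma r\rangle\,d\mu.
\]
Inside the G{\aa}rding cone (in particular for $k$-convex $\Sigma$), the quadratic form $\langle T_{j-1}\cdot,\cdot\rangle$ is nonnegative, so the sign of the correction is governed by $\phi'$.

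For Part (1), $k=1$: the hypothesis reduces to $H_1=\eta(r)/(b_1+c_1)(r)=:\Psi(r)$, a nonincreasing function of $r$. Choosing $\phi=1/\Psi$ (nondecreasing, so $\phi'\geq 0$) in the weighted Minkowski identity yields
\[
\int_\Sigma \frac{h'(r)}{H_1(x)}\,d\mu \;-\; \int_\Sigma \langle X,\nu\rangle\,d\mu \;=\; \int_\Sigma \frac{1}{\Psi(r)}\bigl(h'(r) - H_1\langle X,\nu\rangle\bigr)\,d\mu \;\leq\; 0,
\]
while Brendle's Heintze--Karcher inequality gives the reverse, $\int_\Sigma h'/H_1\geq \int_\Sigma \langle X,\nu\rangle$. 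Hence equality holds, and its rigidity case forces $\Sigma$ to be umbilical.

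For Part (2), $k\geq 2$: star-shapedness gives $\langle X,\nu\rangle>0$ pointwise. Integrating the hypothesis against $\langle X,\nu\rangle$, apply weighted Minkowski (weight $b_j$, increasing) to each $\int b_j H_j\langle X,\nu\rangle$, and Newton--MacLaurin $H_1 H_{j-1}\geq H_j$ followed by weighted Minkowski (weight $c_j$, increasing) to each $\int c_j H_1 H_{j-1}\langle X,\nu\rangle$. Every correction is sign-favorable, yielding
\[
\int_\Sigma \eta(r)\langle X,\nu\rangle\,d\mu \;\geq\; \sum_{j=1}^k \int_\Sigma (b_j+c_j)(r)\,H_{j-1}(x)\,h'(r)\,d\mu.
\]
For the reverse, the pointwise identity $\eta/H_1=\sum b_j H_j/H_1+\sum c_j H_{j-1}\leq \sum(b_j+c_j)H_{j-1}$ (Newton--MacLaurin again), integrated against $h'$ and combined with Brendle's Heintze--Karcher inequality in a weighted form relying on (H4), closes the chain. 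Equality in each Newton--MacLaurin step forces umbilicity, and equality in each weighted Minkowski step forces $\nabla^\Sigma r\equiv 0$, identifying $\Sigma=N^{n-1}\times\{r_0\}$.

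The main obstacle is Part (2): the chain of Newton--MacLaurin defects and weighted Minkowski corrections must be sign-aligned so that the final pinch yields simultaneous equality in all of them. The opposing monotonicities -- $b_j,c_j$ increasing and $\eta$ decreasing -- are designed precisely to enforce this alignment, while (H4) controls the ambient curvature so that the higher-order Heintze--Karcher step is valid. Part (1) is cleaner because the single weight $1/\Psi$ directly matches Brendle's $k=1$ inequality without needing (H4) or star-shapedness.
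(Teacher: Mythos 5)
Your Part (1) is essentially the paper's argument and is correct: since $T_0=\mathrm{Id}$ is divergence-free, the weight $(b_1+c_1)/\eta$ is nonnegative and nondecreasing, the weighted Minkowski identity gives $\int_\Sigma f/H_1\le\int_\Sigma\langle X,\nu\rangle$, and the equality case of Brendle's inequality finishes. Part (2), however, has two genuine gaps. First, the formula you integrate is not an identity for $j\ge 2$: in these warped products the Newton tensors are not divergence-free, and the correct statement carries the additional term $\tfrac{1}{j\binom{n-1}{j}}\int_\Sigma\phi\,(\mathrm{div}_\Sigma T_{j-1})(\xi)$ on the left. The entire role of (H4) and star-shapedness (together with $k$-convexity, which makes the truncated curvatures $H_{j-2;p}$ positive) is to show that $(\mathrm{div}_\Sigma T_{j-1})(\xi)=-\binom{n-3}{j-2}\sum_p H_{j-2;p}\,\xi^p\,\mathrm{Ric}(e_p,\nu)\ge 0$, so that this term can be dropped with the correct sign. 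You instead attribute (H4) to a ``higher-order Heintze--Karcher step,'' which does not occur in this argument; without controlling the divergence term your chain of inequalities is unjustified for $j\ge 2$.

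Second, your closing step invokes ``Brendle's Heintze--Karcher inequality in a weighted form,'' i.e.\ $\int_\Sigma \eta f/H_1\ge\int_\Sigma\eta\langle X,\nu\rangle$ for a decreasing radial weight $\eta$. No such weighted version is available, and it is not clear that one holds. The fix is to normalize at the outset: divide the hypothesis by $\eta(r)$, replacing $b_j,c_j$ by $b_j/\eta,\,c_j/\eta$, which remain nonnegative, smooth, and monotone increasing because $1/\eta$ is positive and increasing. Then $\eta\equiv 1$, your two chains meet exactly at the unweighted inequality $\int_\Sigma(f/H_1-\langle X,\nu\rangle)\le 0$, and the standard Brendle inequality applies. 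Finally, the slice conclusion should be drawn from the rigidity case of Brendle's inequality under (H4), not by forcing $\nabla^\Sigma r\equiv 0$ through equality in the Minkowski correction terms: that step fails whenever some $b_j'$ vanishes, since the functions are only assumed monotone, not strictly increasing.
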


Theorem \ref{first main theorem} contains two special cases worth mentioning: (i) $H_k=\eta(r)$ and (ii) $H_1 H_{k-1}=\eta(r)$, where $\eta(r)$ is a monotone decreasing function. The second case can be regarded as a ``non-linear'' version of the Alexandrov theorem and seems to be a new phenomenon. The same result also applies to the space forms $\mathbb R^n$, $\mathbb H^n$ and $\mathbb S^n_+$ (open hemisphere) without the star-shapedness assumption (Theorem \ref{thm: alex2}).

In general, the monotonicity assumptions on coefficient functions $b_j(r), c_j(r)$ and $\eta(r)$ cannot be dropped. Indeed, as in Remark \ref{rmk: counterexample},
we can show the existence of a thin torus in ${\mathbb{R}}^{3}$, such that its mean curvature function only depends on the radial distance $r$ from the origin and is monotone increasing in $r$.

We also prove the following general rigidity result for linear combinations of higher order mean curvatures, with less stringent assumptions on the ambient space
and a stronger assumption that the immersed hypersurfaces are star-shaped.

\begin{theorem}\label{second main theorem}
Suppose $(M^{n \geq 3}, \bar g)$ satisfies (H2) and (H4). Let $\Sigma$ be a closed star-shaped $k$-convex ($H_k>0$) hypersurface immersed in $M^n$, $\{a_i(r)\}_{i=1}^{l-1}$ and $\{b_j(r)\}_{j=l}^k$ ($2\le l<k\le n-1$) be a family of monotone decreasing, smooth, non-negative functions and a family of monotone increasing, smooth, non-negative functions respectively (where at least one $ a_i(r) $ and one $ b_j(r) $ are positive). Suppose
$$\sum_{i=1}^{l-1}a_i(r)H_i= \sum_{j=l}^{k} b_j(r) H_j.$$
Then $\Sigma$ is totally umbilical.
\end{theorem}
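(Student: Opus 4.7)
The plan is to sandwich the pointwise identity $\sum_i a_i H_i = \sum_j b_j H_j$ between a weighted Hsiung--Minkowski integral step and a pointwise Newton--MacLaurin step, so that the resulting opposite inequalities must both be equalities and thereby force umbilicity.

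The key weighted identity, obtained by applying the divergence theorem on the closed hypersurface $\Sigma$ to the tangent field $\phi(r)\,P_{m-1}(X^T)$ with $X = h(r)\partial_r$ the conformal vector field of the warped product and $P_{m-1}$ the $(m-1)$-st Newton tensor, should take the form
\begin{equation*}
\int_\Sigma \phi(r)\bigl(h'(r)\,H_{m-1} - \langle X,\nu\rangle\,H_m\bigr)\,d\mu = -\frac{1}{\gamma_m}\int_\Sigma \phi'(r)\,h(r)\,\bigl\langle P_{m-1}\nabla^\Sigma r,\nabla^\Sigma r\bigr\rangle\,d\mu
\end{equation*}
for a positive combinatorial constant $\gamma_m$. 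Because $\Sigma$ is $k$-convex (i.e.\ lies in the Garding cone $\Gamma_k^+$), the Newton tensor $P_{m-1}$ is positive semidefinite whenever $m \leq k$, while (H2) gives $h'>0$, so the right side carries the sign of $-\phi'(r)$.

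I would then multiply the hypothesis by the support function $\langle X,\nu\rangle$, positive by star-shapedness, and integrate. Applying the weighted identity termwise, with $(m,\phi)=(i,a_i)$ on the left (where $a_i$ is non-increasing) and $(m,\phi)=(j,b_j)$ on the right (where $b_j$ is non-decreasing), shifts every index down by one and yields
\begin{equation*}
\sum_{j=l}^{k}\int_\Sigma b_j\,h'\,H_{j-1}\,d\mu \;\leq\; \sum_{i=1}^{l-1}\int_\Sigma a_i\,h'\,H_{i-1}\,d\mu. \quad (\ast)
\end{equation*}
For the reverse, the Newton--MacLaurin inequality $H_i H_{j-1} \geq H_{i-1} H_j$ holds pointwise on $\Gamma_k^+$ for any $i<j$, with equality at a point iff all principal curvatures agree there. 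Multiplying by $a_i b_j \geq 0$, summing over the index pairs $i\leq l-1 < l\leq j$, cancelling the common positive factor $\sum_i a_i H_i = \sum_j b_j H_j$, and integrating against $h'>0$ reverses $(\ast)$.

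The enforced equality pinches each weighted Newton--MacLaurin term to be a pointwise equality. Selecting the pair $(i_0,j_0)$ with $a_{i_0},b_{j_0}>0$, which exists by the positivity hypothesis, gives $H_{i_0}/H_{i_0-1} = H_{j_0}/H_{j_0-1}$ on $\Sigma$, and the Garding-cone equality case of Newton--MacLaurin then forces $\Sigma$ to be totally umbilical. The main obstacle I anticipate is the careful bookkeeping of the weighted Hsiung--Minkowski identity together with the precise positivity of the Newton tensors $P_{m-1}$ for $m\leq k$; condition (H4) plays the supporting role of guaranteeing the strict positivity of $H_j$ for $j\leq k$ needed for the cancellation step and ensuring that the Brendle-type inequalities underlying the equality analysis remain strict off the umbilical locus.
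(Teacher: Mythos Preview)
Your overall architecture --- pair a weighted Hsiung--Minkowski step with a pointwise Newton--MacLaurin step, then squeeze --- is exactly the paper's strategy, and your Newton--MacLaurin half (cancelling $\sum_i a_i H_i=\sum_j b_j H_j$ after summing $a_ib_j H_i H_{j-1}\ge a_ib_j H_{i-1}H_j$) matches the paper precisely. The gap is in the other half: your ``key weighted identity'' is only valid in space forms. When you take the divergence of $\phi(r)\,P_{m-1}(X^T)$ on a general warped product, there is an extra term $\phi\,(\mathrm{div}_\Sigma P_{m-1})(X^T)$ that you have dropped; the Newton tensor $P_{m-1}$ is divergence-free only when the ambient curvature tensor makes the Codazzi contribution vanish, which happens in $\mathbb{R}^n$, $\mathbb{H}^n$, $\mathbb{S}^n$ but not in a Schwarzschild-type warped product. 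The correct identity (Proposition~\ref{weighted HS} in the paper) reads
\[
\int_\Sigma \phi\bigl(fH_{m-1}-H_m\langle X,\nu\rangle\bigr)
+\frac{1}{m\binom{n-1}{m}}\int_\Sigma \phi\,(\mathrm{div}_\Sigma P_{m-1})(\xi)
=-\frac{1}{m\binom{n-1}{m}}\int_\Sigma \langle P_{m-1}(\xi),\nabla_\Sigma\phi\rangle,
\]
so your inequality $(\ast)$ does not follow until you control this extra integral.

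This is also where you have misidentified the role of (H4). The positivity of $H_j$ for $j\le k$ comes from $k$-convexity plus the existence of an elliptic point, which uses only (H2). What (H4) actually does is force the sign of the Ricci cross-term: one computes $(\mathrm{div}_\Sigma P_{m-1})(\xi)=-\binom{n-3}{m-2}\sum_p H_{m-2;p}\,\xi^p\,\mathrm{Ric}(e_p,\nu)$, and under (H4) together with star-shapedness each factor $-\xi^p\,\mathrm{Ric}(e_p,\nu)$ is nonnegative. Even with that sign in hand, when you sum over $i$ and $j$ and subtract, the residual term is $\int_\Sigma\sum_p A_p\bigl(\sum_j(j-1)b_jH_{j-2;p}-\sum_i(i-1)a_iH_{i-2;p}\bigr)$ with $A_p\ge 0$, and you need a \emph{second} Newton--MacLaurin--type inequality, namely $jH_iH_{j-1;p}>iH_jH_{i-1;p}$ for the truncated symmetric functions (Lemma~\ref{T positive}\,(\ref{item: c}) in the paper), to show that this bracket has the right sign. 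Only after both the $f$-weighted integrand and this curvature-weighted integrand are shown to be nonnegative does the squeeze close and force equality in Newton--MacLaurin. Your proposal, as written, would prove Theorem~\ref{thm: alex4} (the space-form version) but not Theorem~\ref{second main theorem}.
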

Theorem \ref {second main theorem} contains the case where $\frac{H_k}{H_l}=\eta(r)$ for some monotone decreasing function $\eta$ and $k>l$. We notice that the same result also applies to the space forms $\mathbb R^n$, $\mathbb H^n$ and $\mathbb S^n_+$ (open hemisphere) without the star-shapedness assumption (Theorem \ref{thm: alex4}).
Our result extends \cite[Theorem B]{Koh2000} by S.-E. Koh, \cite[Corollary 3.11]{Kwo2016} by the first named author, and
\cite[Theorem 11]{WX2014} by J. Wu and C. Xia. The monotonicity assumptions on $a_i(r)$ and $b_j(r)$ cannot be dropped, see Remark \ref{rmk: counterexample}.

We next prove, in Section \ref{sec: last}, a rigidity theorem for self-expanding solitons to the \textit{weighted generalized inverse curvature flow} in Euclidean
space ${\mathbb{R}}^{n \geq 3}$:
\begin{equation} \label{gICFintro}
\frac{d}{dt} \mathcal{F}=\sum_{0\le i<j\le n-1}a_{i,j}
{\left( \frac{H_i}{H_j} \right)} ^{\frac{1}{j-i}} \nu,
\end{equation}
where the weight functions $\{a_{i,j}(x)\mid 0\le i<j\le n-1\}$ are non-negative functions on the hypersurface satisfying $\displaystyle \sum_{0\le i<j\le n-1}a_{i,j}(x)=1$. Here, $\nu$ denotes the outward pointing unit normal vector field and $H_j$ is the $j$-th normalized mean curvature.
For example, when $a_{i,j}=1$ for some pair $(i, j)$, we have the generalized inverse curvature flow:
\begin{equation} \label{gICFintro}
\frac{d}{dt} \mathcal{F}= {\left( \frac{H_i}{H_j} \right)} ^{\frac{1}{j-i}} \nu,
\end{equation}
which generalizes the so called inverse curvature flow:
\begin{equation*} \label{ICFintro}
\frac{d}{dt} \mathcal{F} =\frac{H_{j-1}}{H_j} \nu.
\end{equation*}

The inverse curvature flow has been used to prove various geometric inequalities and rigidities: Huisken-Ilmanen \cite{HI2001}, Ge-Wang-Wu \cite{GWW2014}, Li-Wei-Xiong \cite{LWX2014}, Kwong-Miao \cite{KM2014}, Brendle-Hung-Wang \cite{BHT2016}, Guo-Li-Wu \cite{GLW2016}, and Lambert-Scheuer \cite{LS2016}. In Euclidean space, the long time existence of smooth solutions to the generalized inverse curvature flow (\ref{gICFintro}) was proved by Gerhardt in \cite{G1990} and by Urbas in \cite{U1990}, under some natural conditions on the initial closed hypersurface. Furthermore, they showed that the rescaled hypersurfaces converge to a round hypersphere as $ t \rightarrow \infty$.

\begin{theorem} \label{third main theorem}
Let $\Sigma$ be a closed hypersurface immersed in ${\mathbb{R}}^{n \geq 3}$. If $\Sigma$ is a self-expander
to the weighted generalized inverse curvature flow, then it is a round hypersphere.
\end{theorem}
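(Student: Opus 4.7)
The strategy is to combine a pointwise Newton--MacLaurin comparison with the Heintze--Karcher inequality of Ros.

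A self-expander to the weighted flow satisfies (after normalization)
$$\langle X, \nu\rangle \;=\; \sum_{0 \le i < j \le n-1} a_{i,j}(x) \left(\frac{H_i(x)}{H_j(x)}\right)^{1/(j-i)}$$
on $\Sigma$. Each summand is positive wherever it is defined (which is forced by the soliton equation itself) and $\sum a_{i,j}(x) = 1$ with $a_{i,j} \ge 0$, so $\langle X, \nu\rangle > 0$ everywhere. Consequently $\Sigma$ is star-shaped with respect to the origin -- hence embedded, bounding a domain $\Omega$ -- and $H_1 > 0$ on $\Sigma$.

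Next I would establish the pointwise inequality
$$\left(\frac{H_i}{H_j}\right)^{1/(j-i)} \;\ge\; \frac{1}{H_1}, \quad 0 \le i < j \le n-1.$$
Newton's inequality $H_{k-1} H_{k+1} \le H_k^2$ shows the ratios $H_k/H_{k-1}$ are non-increasing in $k$, so every factor in the telescoping product $H_j/H_i = \prod_{k=i+1}^j (H_k/H_{k-1})$ is bounded above by $H_1/H_0 = H_1$, giving $H_j \le H_i H_1^{j-i}$. Equality for some pair with $j \ge 2$ forces $H_2 = H_1^2$ and hence umbilicity at the point by MacLaurin rigidity. Summing with the weights $a_{i,j}$ yields $\langle X, \nu\rangle \ge 1/H_1$ pointwise on $\Sigma$.

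Integrating this pointwise bound and invoking the divergence identity $\int_\Sigma \langle X, \nu\rangle\, dA = n|\Omega|$ together with Ros's Heintze--Karcher inequality $\int_\Sigma (1/H_1)\, dA \ge n|\Omega|$ (equality only for round hyperspheres) yields the pinch
$$n|\Omega| \;=\; \int_\Sigma \langle X, \nu\rangle\, dA \;\ge\; \int_\Sigma \frac{1}{H_1}\, dA \;\ge\; n|\Omega|.$$
Equality in Heintze--Karcher forces $\Sigma$ to be a round hypersphere.

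The main subtlety I foresee is accounting for the overall proportionality constant $\lambda > 0$ in the self-expander equation (if it appears as $\sum a_{i,j}(H_i/H_j)^{1/(j-i)} = \lambda \langle X, \nu\rangle$ rather than with $\lambda$ absorbed into the definition). The pinch above then gives $\lambda \ge 1$; the matching $\lambda \le 1$ is obtained by multiplying the soliton equation by $H_K$ with $K = \max\{j : a_{i,j} \not\equiv 0\}$, integrating, and combining the Hsiung--Minkowski identity $\int_\Sigma H_K \langle X, \nu\rangle\, dA = \int_\Sigma H_{K-1}\, dA$ with the companion Newton--MacLaurin bound $H_K (H_i/H_j)^{1/(j-i)} \le H_{K-1}$ valid for $j \le K$ (because the ratios $H_{k-1}/H_k$ are non-decreasing in $k$). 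Hence $\lambda = 1$, and the main argument concludes.
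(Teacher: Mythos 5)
Your proposal is correct in substance, but it reaches the conclusion by a genuinely different route from the paper. Both arguments share the two pointwise Newton--MacLaurin bounds $\bigl(H_i/H_j\bigr)^{1/(j-i)} \ge 1/H_1$ and $H_K\bigl(H_i/H_j\bigr)^{1/(j-i)} \le H_{K-1}$, and both obtain $\lambda \le 1$ by pairing the latter with the Hsiung--Minkowski identity $\int_\Sigma H_K\langle X,\nu\rangle = \int_\Sigma H_{K-1}$. The divergence occurs afterwards: the paper stays entirely inside the Hsiung--Minkowski framework, getting $\lambda \ge 1$ from $\int_\Sigma H_1\langle X,\nu\rangle = \int_\Sigma H_0$ and then extracting pointwise equality $H_{K-1}/H_K = H_0/H_1$, whose rigidity case gives umbilicity (with a separate short argument when $K=1$, where the upper bound is trivially an equality). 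You instead observe that the soliton equation forces $\langle X,\nu\rangle>0$, deduce that the radial projection to $\mathbb{S}^{n-1}$ is a covering map (hence a diffeomorphism for $n\ge 3$), conclude embeddedness, and close the pinch with $\int_\Sigma\langle X,\nu\rangle = n|\Omega|$ and Ros's Heintze--Karcher inequality, whose equality case gives roundness directly. Your route handles $K=1$ and $K\ge 2$ uniformly and is conceptually parallel to the proof of Theorem \ref{first main theorem}, but it imports an external rigidity theorem and needs the embeddedness step, whereas the paper's argument applies verbatim to immersions with no topological input. One point you should tighten: the positivity of $H_1$ (and of all $H_j$ for $j\le K$) is not ``forced by the soliton equation itself''; it follows from the hypothesis $H_K>0$ in the definition of a self-expander together with the elliptic-point and G\aa rding-cone argument recorded in Lemma \ref{T positive}\eqref{item: p convex}, which you should cite explicitly before invoking the Newton--MacLaurin monotonicity of the ratios $H_{j-1}/H_j$ and Ros's inequality.
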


In the proof of our main results, we shall use several integral equalities and inequalities. Theorem \ref{first main theorem} requires the embeddedness assumption as in the classical Alexandrov Theorem and is proved for the space forms in \cite{Kwo2016}. Theorem \ref{second main theorem} and \ref{third main theorem} require no embeddedness assumption. Theorem \ref{third main theorem} is proved in \cite{DLW2015} for the inverse mean curvature flow.

\section{Preliminaries} \label{Preliminaries}

Let $(N^{n-1}, g_N)$ be an $(n-1)$-dimensional compact manifold with constant curvature $K$. Our
ambient space is the warped product manifold $M^{n \geq 3} = {N}^{n-1} \times [0,\bar{r})$ equipped
with the metric $\bar{g} = dr^2 + h(r)^2 \, g_{N} $. The precise conditions on the warping function $h$ will be stated separately for each result.

In this paper, all hypersurfaces we consider are assumed to be connected, closed, and orientable. On a given hypersurface $\Sigma$ in $M$, we define the normalized $k$-th mean curvature function
\begin{eqnarray}
H_k:=H_k(\Lambda)=\frac{1}{\binom{n-1}{k}}\sigma_k(\Lambda),
\end{eqnarray}
where $\Lambda=(\l_1,\cdots,\l_{n-1})$ are the principal curvature functions on $\Sigma$ and the homogenous polynomial $\sigma_k$ of degree $k$ is the $k$-th elementary symmetric function
\[
\sigma_k(\Lambda)=\sum_{i_1<\cdots<i_{k}}\lambda_{i_1}\cdots\lambda_{i_k}.
\]
We adopt the usual convention $\sigma_0=H_{0}=1$.

\begin{definition}[\textbf{Potential function and conformal vector field}]
In our ambient warped product manifold $M$, we define the potential function $f(r) = h'(r)>0.$
We define the vector field $X = h(r) \, \frac{\partial}{\partial r}=\overline \nabla \psi$,
where $\psi'(r)=h(r)$ and $\overline \nabla$ is the connection on $M$. We note that it is conformal:
$\mathcal L_X \overline g = 2 f\overline g$ \cite[Lemma 2.2]{B2013}.
\end{definition}

\begin{definition}[\textbf{Star-shapeness}]
For a hypersurface $\Sigma$ oriented by the outward pointing unit normal vector field $\nu$, we say that it is star-shaped when $\langle X, \nu \rangle \ge 0$.i
\end{definition}
A useful tool in studying higher order mean curvatures is the $k$-th
Newton transformation $T_k: T\Sigma \rightarrow T \Sigma$ (cf. \cite{reilly1973variational, Rei1977}).
If we write
\[
T_k ( e_j ) =\sum_{i=1}^{n-1} ( T_k )_j^i e_{i},
\]
then $ (T_k) _j^i $ are given by
$$
{(T_k)}_j^{\,i}= \frac 1 {k!}
\sum_{\substack{1 \le i_1,\cdots, i_k \le n-1\\ 1\le j_1, \cdots, j_k \le n-1}}
\delta^{i i_1 \ldots i_k}_{j j_1 \ldots j_k}
A_{i_1}^{j_1}\cdots A_{i_k}^{j_k}
$$
where $(A_i^j)$ is the second fundamental form of $\Sigma$.
If $ \{e_i \}_{i=1}^{n-1} $ consist of eigenvectors of $A$ with
\[
A (e_j) = \lambda_j e_{j},
\]
then we have
\[
T_k (e_j) = \Lambda_j e_{j},
\]
where
\begin{equation}\label{eq-Lambda-i}
\Lambda_j = \sum_{\substack{1 \le i_1 < \cdots < i_k \le n-1, \\
j \notin \{i_1, \cdots, i_k \}}} \lambda_{i_1} \cdots \lambda_{i_k} =\sigma_k(\lambda_1, \cdots, \lambda_{j-1}, \lambda_{j+1}, \cdots, \lambda_{n-1}).
\end{equation}
One also defines $T_0 = \mathrm{Id}$, the identity map.
We have the following basic facts:
\begin{lemma} \label{T positive}
Let $\Sigma$ be a closed hypersurface in a warped product manifold $M^n$ satisfying the condition (H2).
\begin{enumerate}
\item On $\Sigma$, there is an elliptic point, where all principal curvatures are positive.
\item \label{item: p convex}
Assume that $\Sigma$ is $p$-convex $(H_{p}>0)$. Then the following assertions hold
\begin{enumerate}
\item\label{item: a}
For all $k \in \{1, \cdots, p-1\}$, we have
$T_k>0$ and $H_k>0$. For any $j \in \{1,\cdots, n-1\}$, we have
$H_{k;j}:=H_k(\lambda_1, \cdots, \lambda_{j-1}, \lambda_{j+1}, \cdots, \lambda_{n-1})>0$.
\item\label{item: b}
If $1\le i<j\le p$, then $0<\frac{H_{i-1}}{H_i}\le \frac{H_{j-1}}{H_j}$. The equality $\frac{H_{i-1}}{H_i}= \frac{H_{j-1}}{H_j}$ holds if and only if $\lambda_1=\cdots =\lambda_{n-1}$.
\item\label {item: c}
For $1\le i <j\le p$ and for any $l=\{1, \cdots, n-1\}$,
\begin{align*}
j H_i H_{j-1;l}> i H_j H_{i-1;l}.
\end{align*}
\end{enumerate}
\end{enumerate}
\end{lemma}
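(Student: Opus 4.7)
My plan is to treat the four assertions in their stated order, since each builds on the previous one.

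\textbf{Existence of an elliptic point.} I would work with the potential $\psi$ determined by $\overline\nabla\psi=X=h(r)\partial_r$, which is conformal Killing with $\overline\nabla^2\psi=f\bar g$ and $f=h'>0$ by (H2). At a maximum point $p_0$ of the restriction $\psi|_\Sigma$ the tangential part of $X$ vanishes, so $X=\langle X,\nu\rangle\nu$ there; since $\psi$ is strictly increasing in $r$ and $p_0$ realizes the largest $r$-value on $\Sigma$, the outward normal $\nu$ agrees with $X/|X|$ at $p_0$, giving $\langle X,\nu\rangle>0$. Combining the Gauss formula with the conformal-Killing identity yields
\[
\nabla^2_\Sigma\psi(e_i,e_i)=f-\lambda_i\langle X,\nu\rangle\le 0\qquad\text{at }p_0,
\]
so every principal curvature satisfies $\lambda_i\ge f/\langle X,\nu\rangle>0$ there.

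\textbf{Positivity of $H_k$, $H_{k;j}$, and $T_k$.} With one point of $\Sigma$ lying in $\Gamma_{n-1}\subseteq\Gamma_p$ and the hypothesis $H_p>0$ throughout the connected $\Sigma$, I would recall that $\Gamma_p$ is the connected component of $\{\sigma_p>0\}$ containing the positive orthant: by connectedness the image $\lambda(\Sigma)\subseteq\{\sigma_p>0\}$ meets $\Gamma_p$ and hence is contained in it. This yields $H_k>0$ for $1\le k\le p$. For each fixed $j$, the identity $\partial_{\lambda_j}\sigma_k=\sigma_{k-1}^{(j)}$ together with the Caffarelli-Nirenberg-Spruck fact that every partial derivative of $\sigma_k$ is positive on $\Gamma_k$ gives $\sigma_k^{(j)}>0$ for $k\le p-1$, which is exactly $H_{k;j}>0$. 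Since the eigenvalues of $T_k$ in the principal frame are precisely the $\sigma_k^{(j)}$, this also yields $T_k>0$.

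\textbf{Newton-Maclaurin inequalities.} Assertion (b) is the classical Newton-Maclaurin inequality on $\Gamma_p$ together with its standard rigidity statement, which I would simply quote. For (c), I would first convert both sides to unnormalized symmetric polynomials (the combinatorial prefactors on the two sides coincide) to obtain the equivalent statement $\sigma_i\sigma_{j-1}^{(l)}>\sigma_j\sigma_{i-1}^{(l)}$, and then exploit the identity $\sigma_k=\sigma_k^{(l)}+\lambda_l\sigma_{k-1}^{(l)}$: after substitution the $\lambda_l$-contributions cancel, reducing the problem to the purely truncated inequality
\[
\sigma_i^{(l)}\,\sigma_{j-1}^{(l)}\;>\;\sigma_j^{(l)}\,\sigma_{i-1}^{(l)}.
\]
By part (2a) the truncated tuple $(\lambda_1,\dots,\widehat{\lambda_l},\dots,\lambda_{n-1})$ lies in $\Gamma_{p-1}$ for $(n-2)$ variables, so Newton-Maclaurin for this tuple furnishes the inequality for $j\le p-1$; the strict direction comes from the fact that the conversion factors $(n-1-k)/k$ are strictly decreasing in $k$, which forces strict monotonicity of the $\sigma$-ratios from the merely non-strict weighted monotonicity supplied by Newton-Maclaurin. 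The endpoint $j=p$ splits on the sign of $\sigma_p^{(l)}$: if it is non-positive the right-hand side is non-positive while the left-hand side is strictly positive, and if it is positive the truncated tuple actually lies in $\Gamma_p$, so Newton-Maclaurin applies there as well.

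The subtlest step is (2c): one must spot the $\lambda_l$-cancellation that converts the mixed inequality into a pure statement about the $(n-2)$-tuple, and then handle the small degeneracy at the endpoint $j=p$. The remaining parts are routine once one recognizes them respectively as a maximum-principle computation (part 1), a connectedness argument inside the Garding cone (part 2a), and the classical Newton-Maclaurin inequality (part 2b).
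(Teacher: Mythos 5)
Your proposal is correct and follows essentially the same route as the paper: where the paper cites Li--Wei--Xiong for the elliptic point and Barbosa--Colares for the positivity of $T_k$, you supply the standard maximum-principle and G\r{a}rding-cone connectedness arguments that those references use, and for (2c) the paper performs the very same reduction via $\sigma_k=\lambda_l\sigma_{k-1;l}+\sigma_{k;l}$ to the truncated Newton--Maclaurin inequality, with strictness coming from the coefficient gap $j(m-i)>i(m-j)$. Your explicit treatment of the endpoint $j=p$, where $\sigma_p^{(l)}$ may fail to be positive, is a detail the paper's proof glosses over (there the term $-H_{j;l}H_{i-1;l}$ simply helps rather than hurts), so nothing is lost.
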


\begin{proof}
The first assertion is proved in \cite[Lemma 4]{LWX}.
As in the proof of \cite[Proposition 3.2]{BC1997},
$T_k>0$ when $k \in \{1, \cdots, p-1\}$,
which implies $$H_k(\lambda_1, \cdots, \lambda_{j-1}, \lambda_{j+1}, \cdots, \lambda_{n-1})>0$$ by \eqref{eq-Lambda-i}.
Also, $H_k=\frac{1}{(n-1-k){{n-1}\choose k}}\mathrm{tr}_\Sigma (T_k)>0$. For $1\le i<j\le p$, the classical Newton-Maclaurin inequality
$H_{i-1}H_j\le H_{j-1}H_i$
then gives
$0<\frac{H_{i-1}}{H_i}\le \frac{H_{j-1}}{H_j}$, with $\frac{H_{i-1}}{H_i}= \frac{H_{j-1}}{H_j}$  if and only if all $\lambda_l$ are the same.

We now show \eqref{item: c}. Let $\lambda=\lambda_l$, $m=n-1$, and $\sigma_{i;l}={{m-1}\choose i} H_{i;l}$. Note that $\sigma_{i}= \lambda \sigma_{i-1;l}+\sigma_{i;l}$, which implies
$$H_{i}=\frac{i}{m}\lambda H_{i-1;l}+\frac{m-i}{m}H_{i;l}.$$
Using this identity, \eqref {item: a}, and the Newton-Maclaurin inequality, we have
\begin{align*}
&j H_i H_{j-1;l}- i H_j H_{i-1;l}\\
=& j \left(\frac{i}{m}\lambda H_{i-1;l}+\frac{m-i}{m}H_{i;l}\right)H_{j-1;l}
- i \left(\frac{j}{m}\lambda H_{j-1;l}+\frac{m-j}{m}H_{j;l}\right) H_{i-1;l}\\
=& \frac{j(m-i)}{m}H_{i;l}H_{j-1;l}- \frac{i(m-j)}{m}H_{j;l}H_{i-1;l}\\
=&
(j-i)H_{j-1;l}H_{i-1;l}+
\frac{i(m-j)}{m}(H_{i;l} H_{j-1;l}-H_{j;l}H_{i-1;l})\\
>&0.
\end{align*}
\end{proof}
For the reader's convenience, let us also record the following Heintze-Karcher-type inequality due to Brendle \cite[Theorem 3.5 and 3.11]{B2013}, which is crucial in our proof of Theorem \ref{first main theorem}.
\begin{proposition}[\textbf{Brendle's Inequality}] \label{prop brendle}
Suppose the warped product manifold $(M,\bar g)$ satisfies (H1), (H2), and (H3). Let $\Sigma$ be a closed hypersurface embedded in $(M,\bar g)$ with positive mean curvature. Then $$\int_\Sigma \frac{f}{H_1}\ge \int_\Sigma \langle X, \nu\rangle.$$
The equality holds if and only if $\Sigma$ is umbilical. If, futhermore, (H4) is satisfied, then $\Sigma$ is a slice $N\times \{r_0\}$.
\end{proposition}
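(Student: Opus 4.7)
Since this is Brendle's weighted Heintze--Karcher inequality from \cite{B2013}, my plan follows the spirit of the original proof, namely a weighted Reilly-type identity on the enclosed domain. Let $\Omega \subset M$ denote the precompact region bounded by the embedded hypersurface $\Sigma$ (which exists because (H2) guarantees the radial function separates the ends). The key structural fact is that conditions (H1)--(H3) are precisely the equivalent warped-product packaging of the substatic tensor inequality
\begin{equation*}
\mathrm{Q} \;:=\; f\,\overline{\mathrm{Ric}} - \overline{\nabla}^{2} f + (\overline{\Delta} f)\,\bar g \;\geq\; 0
\end{equation*}
on $M$, together with the identity $\overline{\nabla}^{2}\psi = f\,\bar g$ for the potential $\psi$ with $\overline{\nabla}\psi = X$.

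Next, I would solve a linear elliptic Dirichlet problem on $\Omega$: find a smooth $u$ with $u=0$ on $\Sigma$ and a weighted Poisson-type equation such as $f\,\overline{\Delta} u - \overline{\nabla} f\cdot\overline{\nabla} u = nf^{2}$ on $\Omega$; the precise normalization is chosen so that $\psi$ serves as the ``model'' radial solution. I would then apply a weighted Reilly identity: multiplying the Bochner-type identity for $u$ by $f$ and integrating by parts over $\Omega$ yields
\begin{equation*}
\int_{\Omega} f\Bigl[\,|\overline{\nabla}^{2}u|^{2}-\tfrac{(\overline{\Delta}u)^{2}}{n}\,\Bigr] + \int_{\Omega}\mathrm{Q}(\overline{\nabla} u,\overline{\nabla} u) \;=\; \int_{\Sigma}(\text{boundary terms}),
\end{equation*}
where the boundary integrand, after using $u=0$ along $\Sigma$, is a polynomial in the normal derivative $u_{\nu}$, the mean curvature $H_{1}$, and $f$. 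The first interior integrand is nonnegative by Cauchy--Schwarz, and the second is nonnegative by the substatic inequality, so the right-hand side is nonnegative.

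A pointwise AM-GM estimate on $\Sigma$ then converts the boundary integral into the bound $\int_{\Sigma}(f/H_{1}) \ge \int_{\Sigma}\langle X,\nu\rangle$, after an integration by parts against $X$ that exploits $\overline{\nabla}^{2}\psi = f\,\bar g$. The main bookkeeping obstacle is identifying the boundary integrand cleanly and choosing the normalization of $u$ so AM-GM produces exactly the ratio $f/H_{1}$ rather than a weaker average. For the equality case, Cauchy--Schwarz forces $\overline{\nabla}^{2}u = (\overline{\Delta}u/n)\,\bar g$ on $\Omega$ and pointwise AM-GM equality on $\Sigma$ forces all principal curvatures to coincide, so $\Sigma$ is umbilical. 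Finally, assuming additionally (H4), on any connected umbilical $\Sigma$ the identities $\overline{\nabla}_{e}\langle X,\nu\rangle$ and $\overline{\nabla}_{e}|X|^{2}$ along $\Sigma$, combined with the pointwise umbilicity condition, yield an algebraic relation that together with the strict inequality $h''/h+(K-h'^{2})/h^{2}>0$ forces $r|_{\Sigma}$ to be constant; hence $\Sigma = N\times\{r_{0}\}$.
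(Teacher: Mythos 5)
The paper does not prove this proposition at all: it is quoted verbatim from Brendle \cite[Theorems 3.5 and 3.11]{B2013}, so there is no internal proof to compare against. Your sketch follows a genuinely different route from Brendle's original argument. Brendle works with the inward normal exponential map: he shows the enclosed region is swept out (up to a null set) by normal geodesics before their cut/focal times, and uses a Riccati-type ODE comparison, driven by the substatic tensor $(\overline{\Delta} f)\bar g - \overline{\nabla}^2 f + f\,\overline{\mathrm{Ric}} \ge 0$, to control the weighted Jacobian; the inequality then follows from the co-area formula. What you describe is instead the elliptic route of Li--Xia \cite{LX2016} (cited in this paper): a weighted Reilly identity applied to the solution of a degenerate Dirichlet problem. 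That route is valid and arguably cleaner in its equality analysis (no cut-locus regularity issues), and your identification of (H3) with the substatic inequality and of $\overline{\nabla}^2\psi = f\bar g$ is correct and is exactly how both proofs exploit the warped structure.

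As written, though, your sketch has two concrete gaps. First, in the Schwarzschild/Reissner--Nordstr\"om setting the region bounded by $\Sigma$ generally has a second boundary component, the horizon $N\times\{0\}$, where $f = h'(0) = 0$ by (H1); your operator $f\overline{\Delta}u - \langle\overline{\nabla}f,\overline{\nabla}u\rangle$ degenerates there, and the Reilly-formula boundary terms on that component must be shown to vanish or to have a favorable sign (this is precisely where (H1), i.e.\ the horizon being totally geodesic with $f$ vanishing to first order, is used). You cannot simply take $\partial\Omega=\Sigma$. Second, the final step --- equality plus (H4) implies $\Sigma$ is a slice --- is asserted rather than proved. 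The standard argument is: umbilicity plus the Codazzi equation gives $(n-2)\,e_i(\lambda) = -\overline{\mathrm{Ric}}(e_i,\nu)$ for the umbilicity factor $\lambda$, and the explicit Ricci formula recorded in the paper turns this into $e_i(\lambda) = \bigl(\tfrac{h''}{h}+\tfrac{K-h'^2}{h^2}\bigr)\langle\partial_r,e_i\rangle\langle\partial_r,\nu\rangle$; combining this with the relation between $\lambda$, $f$ and $\langle X,\nu\rangle$ forced by the equality case, the strict positivity in (H4) yields $\nabla_\Sigma r \equiv 0$. Your appeal to ``an algebraic relation'' does not yet supply this chain.
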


\section{Proof of Theorem \ref{first main theorem} and \ref{second main theorem}}

The following formulas will play an essential role in our proof.
\begin{proposition} \label{weighted HS}
Let $\phi$ be a smooth function on a closed hypersurface $\Sigma$ in a Riemannian manifold $M^n$.
\begin{enumerate}
\item \textrm{(\textbf{Weighted Hsiung-Minkowski formulas})} \label{item: HM}
For $k \in \{1, \cdots, n-1\}$, we have
\begin{equation} \label{weighted in}
\begin{split}
&\int_\Sigma \phi \left( f H_{k-1} - H_{k} \langle X, \nu\rangle \right)
+\frac{1}{k{{n-1}\choose k}}\int_\Sigma \phi \left(\mathrm{div}_\Sigma T_{k-1}\right)(\xi)\\
=&-\frac{1}{k{{n-1}\choose k}}\int_\Sigma \langle T_{k-1}(\xi), \nabla _\Sigma \phi\rangle.
\end{split}
\end{equation}
Here, $\xi=X^T$ is the tangential projection of the conformal vector field $X$ onto $T\Sigma$. (Note that $\mathrm{div}(T_0)=0$.)
\item \label{item: div}
Suppose $(M^n, \bar g)$ is the warped product manifold in Section \ref{Preliminaries}.
Then, for $k \in \{2, \cdots, n-1\}$,
\begin{equation}\label{eq: div}
(\mathrm{div}_\Sigma T_{k-1}) (\xi)
=-{{n-3}\choose {k-2}}\sum_{j=1}^{n-1} H_{k-2;j} \xi^j \mathrm{Ric}(e_j, \nu).
\end{equation}
Here, $\{e_j\}_{j=1}^{n-1}$ and $\{\lambda_j\}_{j=1}^{n-1}$ are the principal directions and principal curvatures of $\Sigma$, respectively, and
$H_{k-2;j} =H_{k-2} (\lambda_1, \cdots, \lambda_{j-1}, \lambda_{j+1}, \cdots, \lambda_{n-1})$.\\
If $\Sigma$ is star-shaped and (H4) is satisfied, then, for each $j\in \{1, \cdots, n-1\}$,
\begin{equation}\label{eq: xiRic}
-\xi^j\mathrm{Ric}(e_j, \nu)\ge 0 .
\end{equation}
\end{enumerate}
\end{proposition}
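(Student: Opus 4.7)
The plan is to prove parts (1) and (2) by complementary techniques: part (1) via a tangential divergence computation combined with the conformality of $X$, and part (2) via the Codazzi equation together with the explicit form of the Ricci tensor of a warped product.

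First, for the weighted Hsiung-Minkowski identity, I would observe that $X = \bar\nabla \psi$ satisfies $\mathrm{Hess}\,\psi = f\bar g$, so $\bar\nabla_Y X = fY$ for every $Y$. Decomposing $X = \xi + \langle X,\nu\rangle \nu$ and taking tangential projections then gives
\[
\nabla_\Sigma \xi = fI - \langle X,\nu\rangle A
\]
as a $(1,1)$-tensor on $\Sigma$. Pairing with $T_{k-1}$ and using $\mathrm{tr}(T_{k-1}) = (n-k)\binom{n-1}{k-1}H_{k-1}$ together with $\mathrm{tr}(T_{k-1}\circ A) = k\binom{n-1}{k}H_k$ would give
\[
\langle T_{k-1}, \nabla_\Sigma\xi\rangle = k\binom{n-1}{k}\bigl(fH_{k-1} - H_k\langle X,\nu\rangle\bigr).
\]
Then expanding
\[
\mathrm{div}_\Sigma\bigl(\phi\,T_{k-1}(\xi)\bigr) = \phi(\mathrm{div}_\Sigma T_{k-1})(\xi) + \phi\langle T_{k-1},\nabla_\Sigma\xi\rangle + \langle T_{k-1}(\xi),\nabla_\Sigma \phi\rangle
\]
and integrating over the closed $\Sigma$ annihilates the left-hand side by the divergence theorem, yielding \eqref{weighted in} after rearranging.

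Next, for the divergence identity \eqref{eq: div}, I would proceed via a standard Reilly-type computation: at a point, in a local frame $\{e_j\}$ diagonalizing $A$, the Codazzi equation $(\nabla_X A)(Y) - (\nabla_Y A)(X) = (\bar R(X,Y)\nu)^T$ expresses the antisymmetric part of $\nabla A$ in terms of ambient curvature, and substituting into the divergence of $T_{k-1}$ (through the recursion $T_{k-1} = \sigma_{k-1}I - A T_{k-2}$ or the expansion in powers of $A$) produces the curvature contribution with the claimed combinatorial coefficient $\binom{n-3}{k-2}$ and the $H_{k-2;j}$-weights.

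The main obstacle, and the key new input beyond the standard Reilly-type identities, is the pointwise inequality \eqref{eq: xiRic}. For this I would compute the Ricci tensor of $(M,\bar g)$ explicitly: using that the sectional curvatures are $-h''/h$ for planes containing $\partial_r$ and $(K-(h')^2)/h^2$ for planes tangent to $N$, a direct calculation yields
\[
\mathrm{Ric}(Y,Z) = \alpha(r)\langle Y,Z\rangle + \beta(r)\langle Y,\partial_r\rangle\langle Z,\partial_r\rangle
\]
with $\beta(r) = -(n-2)\bigl[\tfrac{h''}{h} + \tfrac{K-(h')^2}{h^2}\bigr]$. Because $\langle e_j,\nu\rangle = 0$, the $\alpha$-term drops out and $\mathrm{Ric}(e_j,\nu) = \beta(r)\langle e_j,\partial_r\rangle\langle \nu,\partial_r\rangle$. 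Since $\xi = h\partial_r - \langle X,\nu\rangle\nu$, we have $\xi^j = h\langle \partial_r, e_j\rangle$, so
\[
-\xi^j\mathrm{Ric}(e_j,\nu) = \bigl(-h\beta(r)\bigr)\langle \partial_r,e_j\rangle^2\langle \nu,\partial_r\rangle.
\]
Condition (H4) is precisely $-\beta(r) > 0$, and star-shapedness gives $\langle \nu,\partial_r\rangle = \langle X,\nu\rangle/h \geq 0$; each factor is thus non-negative, yielding \eqref{eq: xiRic}.
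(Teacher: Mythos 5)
Your proposal is correct and follows essentially the same route as the paper: the paper obtains \eqref{weighted in} by integrating the pointwise identity $\mathrm{div}_\Sigma(\phi T_{k-1}(\xi)) = (n-k)f\sigma_{k-1}\phi - k\sigma_k\phi\langle X,\nu\rangle + \phi(\mathrm{div}_\Sigma T_{k-1})(\xi) + \langle T_{k-1}(\xi),\nabla_\Sigma\phi\rangle$ (cited from Kwong's earlier work, which you rederive correctly from $\bar\nabla_Y X = fY$ and the trace identities), gets \eqref{eq: div} by citing the Codazzi-based computation of Brendle--Eichmair that you sketch, and proves \eqref{eq: xiRic} from exactly the Ricci decomposition $\mathrm{Ric} = \alpha\bar g + \beta\,dr^2$ with $\beta = -(n-2)\bigl[\tfrac{h''}{h}+\tfrac{K-(h')^2}{h^2}\bigr]$ that you use. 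The only part left at sketch level is the combinatorial coefficient $\binom{n-3}{k-2}=\tfrac{n-k}{n-2}\binom{n-2}{k-2}$ in \eqref{eq: div}, but that matches the level of detail in the paper itself, which defers to the reference.
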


\begin{proof}
Let $\xi=X^T=X-\langle X, \nu\rangle \nu$, and recall that $X$ is conformal: $\mathcal L_X \overline g = 2 f\overline g$.
By \cite[Proposition 3.1]{Kwo2016}, we have
\[
\mathrm{div}_\Sigma (\phi T_{k-1}(\xi))
= (n-k) f \sigma_{k-1}\phi- k \sigma_k \phi\langle X, \nu\rangle+ \phi (\mathrm{div}_\Sigma T_{k-1})(\xi)+\langle T_{k-1}(\xi), \nabla _\Sigma \phi\rangle.
\]
Integrating this equation, we get \eqref{weighted in}.

We now show \eqref{item: div}.
Take a local orthonormal frame $\nu$, $e_{1}$, $\cdots$, $e_{n-1}$, so that $e_{1}$, $\cdots$, $e_{n-1}$ are the principal directions of $\Sigma$.
By the proof of \cite[Proposition 8]{BE2013} (note that $T^{(k)}$ in \cite{BE2013} is the $(k-1)$-th Newton transformation), we have
\[
(\mathrm{div}_\Sigma T_{k-1}) \xi =-\frac{n-k}{n-2}\sum_{j=1}^{n-1} \sigma_{k-2;j} \xi^j \mathrm{Ric}(e_j, \nu),
\]
where $\sigma_{k-2;j} =\sigma_{k-2} (\lambda_1, \cdots, \lambda_{j-1}, \lambda_{j+1}, \cdots, \lambda_{n-1})$, which is equivalent to \eqref{eq: div}.

It remains to show \eqref{eq: xiRic}.
As in \cite[Equation (2)]{B2013}, we compute
\[
\mathrm{Ric}
=-\left(\frac{h''(r)}{h(r)} -(n-2)\frac{K-h'(r)^2}{h(r)^2}\right)\overline g
-(n-2) \left(\frac{h''(r)}{h(r)} +\frac{ K-h'(r)^2}{h(r)^2}\right) {dr}^{2}.
\]
By the assumption (H4) and star-shaped condition $ \langle \frac{\partial}{\partial r}, \nu\rangle>0$, we have
\begin{align*}
-\xi^j\mathrm{Ric}( e_j, \nu)
=(n-2)\left(\frac{h''(r)}{h(r)}+\frac{K-h'(r)^2}{h(r)^2}\right)
\frac{(\xi^j)^2}{h(r)} \langle \frac{\partial}{\partial r}, \nu \rangle \geq 0.
\end{align*}
\end{proof}

\begin{theorem}[$=$ \textbf{Theorem \ref{first main theorem}}]\label{thm: alex1}
Suppose $(M^{n\geq 3}, \bar g)$ is the warped product manifold in Section \ref{Preliminaries}.
Let $\Sigma$ be a closed $k$-convex ($H_k>0$) hypersurface embedded in $M^n$.
Let $\{b_j(r)\}_{j=1}^k$ and $\{c_j(r)\}_{j=1}^k$ be two families of monotone increasing, smooth, non-negative functions. Suppose
\begin{equation}\label{eq: ass'}
\sum_{j=1}^{k} \left(b_j(r) H_j +c_j(r)H_1 H_{j-1}\right)=\eta(r)
\end{equation}
for some smooth positive radially symmetric function $\eta(r)$
which is monotone decreasing in $r$.
\begin{enumerate}
\item $k=1:$ Assume (H1), (H2), (H3). Then $\Sigma$ is umbilical.
\item $k \in \{2, \cdots, n-1\}:$ Assume (H1), (H2), (H3), (H4). If $\Sigma$ is star-shaped, then it is a slice ${N}^{n-1} \times \left\{r_{0} \right\}$ for some constant $r_{0}$.
\end{enumerate}
\end{theorem}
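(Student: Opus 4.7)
The plan is to combine the weighted Hsiung--Minkowski formulas (Proposition~\ref{weighted HS}) with Brendle's inequality (Proposition~\ref{prop brendle}) and the Newton--Maclaurin inequalities (Lemma~\ref{T positive}\eqref{item: b}) to sandwich $\int_\Sigma f/H_1$ between $\int_\Sigma \langle X,\nu\rangle$ and itself, thereby forcing equality in Brendle's inequality.

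The key building block is the following consequence of Proposition~\ref{weighted HS}\eqref{item: HM}: if $\phi=\phi(r)\ge 0$ is any smooth monotone increasing radial function, then for every $j\in\{1,\dots,k\}$,
\begin{equation*}
\int_\Sigma \phi\, f\, H_{j-1} \;\le\; \int_\Sigma \phi\, H_j\, \langle X,\nu\rangle.
\end{equation*}
Indeed, the divergence term $\int_\Sigma \phi\,(\mathrm{div}_\Sigma T_{j-1})(\xi)$ is non-negative by Proposition~\ref{weighted HS}\eqref{item: div} together with star-shapedness, (H4), and the positivity $H_{j-2;l}>0$ from Lemma~\ref{T positive}\eqref{item: a} (valid since $\Sigma$ is $k$-convex and $j\le k$); the integrand of the gradient term equals $\langle T_{j-1}(\xi),\nabla_\Sigma\phi\rangle = \phi'(r)h(r)\langle T_{j-1}(\nabla_\Sigma r),\nabla_\Sigma r\rangle \ge 0$ since $T_{j-1}\ge 0$ by Lemma~\ref{T positive}\eqref{item: a} and $\phi'\ge 0$.

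Since $b_j,c_j$ are non-negative and monotone increasing while $\eta>0$ is monotone decreasing, both $b_j/\eta$ and $c_j/\eta$ are non-negative monotone increasing radial functions. Applying the building block separately with $\phi=b_j/\eta$ and with $\phi=c_j/\eta$, and then applying Newton--Maclaurin $H_j\le H_1 H_{j-1}$ on the right-hand side of the $c_j/\eta$ instance only, I obtain for each $j$
\begin{equation*}
\int_\Sigma \frac{(b_j+c_j)\,f\,H_{j-1}}{\eta} \;\le\; \int_\Sigma \frac{b_j H_j + c_j H_1 H_{j-1}}{\eta}\,\langle X,\nu\rangle.
\end{equation*}
Summing over $j$ and substituting the hypothesis $\sum_j(b_j H_j+c_j H_1 H_{j-1})=\eta$ gives the first bound $\sum_{j=1}^{k}\int_\Sigma (b_j+c_j)fH_{j-1}/\eta \le \int_\Sigma \langle X,\nu\rangle$. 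For the matching bound in the opposite direction, I expand $\int_\Sigma f/H_1 = \int_\Sigma (f/H_1)(\eta/\eta)$ using the same hypothesis and apply Newton--Maclaurin $H_j/H_1\le H_{j-1}$ termwise to the $b_j$-summands:
\begin{equation*}
\int_\Sigma \frac{f}{H_1} = \sum_{j=1}^{k}\int_\Sigma \frac{b_j f H_j}{H_1\eta} + \sum_{j=1}^{k}\int_\Sigma \frac{c_j f H_{j-1}}{\eta} \;\le\; \sum_{j=1}^{k}\int_\Sigma \frac{(b_j+c_j)\,f\,H_{j-1}}{\eta}.
\end{equation*}

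Combining these two bounds with Brendle's inequality $\int_\Sigma f/H_1\ge \int_\Sigma\langle X,\nu\rangle$ (Proposition~\ref{prop brendle}) collapses the chain to
\begin{equation*}
\int_\Sigma \frac{f}{H_1} \;\le\; \sum_{j=1}^{k}\int_\Sigma \frac{(b_j+c_j)\,f\,H_{j-1}}{\eta} \;\le\; \int_\Sigma\langle X,\nu\rangle \;\le\; \int_\Sigma \frac{f}{H_1},
\end{equation*}
so equality holds in Brendle's inequality. The equality case of Proposition~\ref{prop brendle} yields umbilicity, which is the conclusion when $k=1$; for $k\in\{2,\dots,n-1\}$, the additional hypothesis (H4) upgrades umbilicity to the slice conclusion via the last assertion of Proposition~\ref{prop brendle}. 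I expect the main effort to lie in the sign bookkeeping---verifying that $T_{j-1}\ge 0$ and $H_{j-2;l}>0$ for every $j\le k$ under the single hypothesis $H_k>0$ (both packaged in Lemma~\ref{T positive}\eqref{item: a})---and in recognizing that dividing the structural identity by the common factor $\eta(r)$ is exactly what makes the weighted Hsiung--Minkowski formula combine cleanly with Newton--Maclaurin at every index simultaneously.
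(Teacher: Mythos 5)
Your proposal is correct and follows essentially the same route as the paper: the authors normalize $\eta\equiv 1$ by dividing the hypothesis through by $\eta$ (equivalently, your choice of test functions $b_j/\eta$ and $c_j/\eta$ in the weighted Hsiung--Minkowski formula), derive the same sign inequalities from the divergence and gradient terms, and close the argument with Newton--Maclaurin and the equality case of Brendle's inequality. The only point worth flagging is that for $k=1$ your ``building block'' should note that the divergence term vanishes identically ($\mathrm{div}\,T_0=0$), so neither star-shapedness nor (H4) is actually invoked there, consistent with the weaker hypotheses of that case.
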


\begin{proof}
By dividing \eqref{eq: ass'} by $\eta(r)$, it suffices to prove the result in the case where
\begin{equation}\label{eq: normalized assumption}
\sum_{j=1}^{k} \left(b_j(r) H_j +c_j(r) H_1 H_{j-1}\right)=1.
\end{equation}
Assume first $j\in \{2, \cdots, k\}$.
By Proposition \ref{weighted HS} \eqref{item: div} and Lemma \ref{T positive}, $(\mathrm{div}_\Sigma T_{j-1}) \xi\ge 0$.
Therefore by the monotonicity of $b_j$, for each $j$ we have
\begin{equation}\label{eq: bj}
\begin{split}
&\int_\Sigma b_j(r) \left( f H_{j-1} - H_j \langle X, \nu\rangle \right)\\
\le&-\frac{1}{j{{n-1}\choose j}}\int_\Sigma \langle T_{j-1}(\xi), \nabla _\Sigma b_j\rangle
=-\frac{1}{j{{n-1}\choose j}}\int_\Sigma h(r)b_j'(r)\langle T_{j-1}(\nabla _\Sigma r), \nabla _\Sigma r\rangle
\le 0.
\end{split}
\end{equation}
Note that this inequality also holds for $j=1$ by Proposition \ref{weighted HS} (without assuming (H4) and star-shapedness).

Similarly, by the Newton-Maclaurin inequality, for each $j$ we have
\begin{equation}\label{eq: cj}
\begin{split}
& \int_\Sigma c_j(r) \left( f H_{j-1} - H_1H_{j-1} \langle X, \nu\rangle \right)\\
\le&\int_\Sigma c_j(r) \left( f H_{j-1} - H_j \langle X, \nu\rangle \right)
\le-\frac{1}{j{{n-1}\choose j}}\int_\Sigma \langle T_{j-1}(\xi), \nabla _\Sigma c_j\rangle
\le 0.
\end{split}
\end{equation}
Adding \eqref{eq: bj} and \eqref{eq: cj} together and then summing over $j$, using \eqref{eq: normalized assumption}, we have
\begin{equation}\label{ineq: 1}
\int_\Sigma \left(f\sum_{j=1}^k \left(b_j(r) H_{j-1}+c_j(r) H_{j-1}\right)-\langle X, \nu\rangle\right)\le 0.
\end{equation}
Note that $H_i>0$ for $i\le k$ by Lemma \ref{T positive}.
Multiplying the Newton-Maclaurin inequality $H_1 H_{j-1}\ge H_j$ by $b_j(r)$ and summing over $j$ gives
$$ H_1\sum_{j=1}^k b_j(r) H_{j-1}\ge \sum_{j=1}^k b_j(r) H_j. $$
Combining this with \eqref{ineq: 1}, we obtain the inequality
\begin{align*}
0\ge&
\int_\Sigma \left(\frac{f}{H_1}\sum_{j=1}^k \left(b_j(r) H_{j}+c_j(r) H_1H_{j-1}\right)-\langle X, \nu\rangle\right)
=\int_\Sigma \, \left( \frac{f}{H_{1}} - \langle X, \nu\rangle \right).
\end{align*}
However, Brendle's inequality (Proposition \ref{prop brendle})
is the reverse inequality
$$
\int_\Sigma \, \left( \frac{f}{H_{1}} - \langle X, \nu\rangle \right) \geq 0.
$$
These two inequalities imply the equality in Brendle's inequality. We conclude that $\Sigma$ is umbilical and that, in the case when the condition (H4) holds, it is a slice.
\end{proof}

Due to the Brendle's inequality \cite[Theorem 3.5]{B2013}  and the analogous, but simpler, weighted Hsiung-Minkowski integral formulas in the space forms (cf. \cite{Kwo2016}), without assuming the star-shapedness condition, we can use the idea of Theorem \ref{thm: alex1} to prove
\begin{theorem}\label{thm: alex2}
Let $\Sigma$ be a closed $k$-convex hypersurface embedded in $M^{n \geq 3}=\mathbb R^n$, $\mathbb H^n$ or $\mathbb S^n_+$ (open hemisphere). Let $r$ be the distance in $M^n$ from a fixed point $p_0\in M$ (chosen to be the center if $M=\mathbb S^n_+$). Let $\{b_j(r)\}_{j=1}^k$ and $\{c_j(r)\}_{j=1}^k$ be two families of monotone increasing, smooth, non-negative functions. Suppose
\begin{align*}
\sum_{j=1}^{k} \left(b_j(r) H_j +c_j(r)H_1 H_{j-1}\right)=\eta(r)
\end{align*}
for some smooth positive radially symmetric function $\eta(r)$
which is monotone decreasing in $r$. Then $\Sigma$ is a geodesic hypersphere.
\end{theorem}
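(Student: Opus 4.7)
The plan is to adapt the proof of Theorem~\ref{thm: alex1} to the space-form setting, exploiting two simplifications that together make it possible to drop the star-shapedness hypothesis. First, in $\mathbb{R}^n$, $\mathbb{H}^n$, and $\mathbb{S}^n_+$ the Ricci tensor is a constant multiple of the metric, so $\mathrm{Ric}(e_j,\nu)=0$ for every principal tangent direction $e_j$; by Proposition~\ref{weighted HS}(\ref{item: div}) this forces $(\mathrm{div}_\Sigma T_{j-1})(\xi)\equiv 0$ identically, eliminating the step in Theorem~\ref{thm: alex1} that invoked star-shapedness and (H4) to sign the Ricci-weighted divergence term. Second, the classical Heintze-Karcher inequality $\int_\Sigma f/H_1\ge \int_\Sigma \langle X,\nu\rangle$ is available in all three space forms for every closed embedded hypersurface with $H_1>0$ (Ros \cite{Ros1987, Ros1988}, Montiel-Ros for the hyperbolic case, and the analogous spherical version), without any star-shapedness hypothesis.

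After dividing the identity $\sum_{j=1}^k(b_j(r)H_j+c_j(r)H_1H_{j-1})=\eta(r)$ through by $\eta(r)$, which preserves the monotonicity properties of $b_j$ and $c_j$ since $\eta$ is decreasing, we may assume $\sum_{j=1}^k(b_j(r)H_j+c_j(r)H_1H_{j-1})=1$. For each $j\in\{1,\dots,k\}$ I would then apply the weighted Hsiung-Minkowski formula (\ref{weighted in}) with $\phi=b_j(r)$ and with $\phi=c_j(r)$. Writing $\xi=h(r)\nabla_\Sigma r$ and $\nabla_\Sigma b_j=b_j'(r)\nabla_\Sigma r$, and using the positive semi-definiteness of $T_{j-1}$ for $j\le k$ (Lemma~\ref{T positive}(\ref{item: a}), from $k$-convexity), the vanishing of the divergence term together with the monotonicity of $b_j,c_j$ gives
\begin{equation*}
\int_\Sigma b_j(r)\bigl(fH_{j-1}-H_j\langle X,\nu\rangle\bigr)\le 0,\qquad
\int_\Sigma c_j(r)\bigl(fH_{j-1}-H_j\langle X,\nu\rangle\bigr)\le 0.
\end{equation*}

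Summing these inequalities over $j$ and invoking the Newton-Maclaurin inequality $H_j\le H_1H_{j-1}$ together with the normalization, one derives the scalar estimate $\int_\Sigma(f/H_1-\langle X,\nu\rangle)\le 0$, following the Newton-Maclaurin combination carried out in Theorem~\ref{thm: alex1}. Pairing this with the classical Heintze-Karcher inequality, which supplies the reverse inequality, forces equality and hence $\Sigma$ is totally umbilical; a closed, embedded, totally umbilical hypersurface in $\mathbb{R}^n$, $\mathbb{H}^n$, or $\mathbb{S}^n_+$ is a geodesic hypersphere. The main anticipated obstacle is the bridging step: in Theorem~\ref{thm: alex1} the passage $H_1H_{j-1}\langle X,\nu\rangle\ge H_j\langle X,\nu\rangle$ was performed \emph{pointwise} and needed $\langle X,\nu\rangle\ge 0$, so the corresponding manipulation in the space-form proof must instead be organized as an \emph{integrated} identity, relying on the stronger Hsiung-Minkowski relations available in constant sectional curvature (where $T_k$ is divergence-free) rather than on any sign of $\langle X,\nu\rangle$.
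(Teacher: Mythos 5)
Your overall route is the paper's: Theorem \ref{thm: alex2} is obtained by rerunning the proof of Theorem \ref{thm: alex1} with exactly the two space-form simplifications you identify, namely that the Newton tensors are divergence-free there (so the $(\mathrm{div}_\Sigma T_{j-1})(\xi)$ term in \eqref{weighted in} vanishes identically, with no need for (H4) or star-shapedness) and that Proposition \ref{prop brendle} reduces to the classical Ros/Heintze--Karcher inequality, valid for any closed embedded mean-convex hypersurface. For the $b_j$-terms your argument is complete and correct.

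There is, however, a genuine gap in your treatment of the $c_j$-terms, which you flag but do not close. From \eqref{weighted in} you only obtain $\int_\Sigma c_j\left(fH_{j-1}-H_j\langle X,\nu\rangle\right)\le 0$, whereas the normalization reads $\sum_j\left(b_jH_j+c_jH_1H_{j-1}\right)=1$; to reach $\int_\Sigma\left(f/H_1-\langle X,\nu\rangle\right)\le 0$ you must still pass from $\int_\Sigma\sum_jc_jH_j\langle X,\nu\rangle$ to $\int_\Sigma\sum_jc_jH_1H_{j-1}\langle X,\nu\rangle$, i.e.\ you need $\int_\Sigma\sum_jc_j\left(H_1H_{j-1}-H_j\right)\langle X,\nu\rangle\ge 0$. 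This is precisely the first inequality of \eqref{eq: cj}, where Theorem \ref{thm: alex1} applies Newton--Maclaurin pointwise against the sign $\langle X,\nu\rangle\ge 0$, and it is the one place in the $c_j$-computation where star-shapedness is genuinely used. Your proposed remedy --- reorganizing this step ``as an integrated identity'' via the divergence-freeness of $T_{j-1}$ --- is asserted but never exhibited, and it does not follow from the tools you list: applying \eqref{weighted in} at level $1$ with weight $\phi=c_jH_{j-1}$ produces the uncontrolled term $\int_\Sigma c_j\langle\xi,\nabla_\Sigma H_{j-1}\rangle$, and divergence-freeness of the Newton tensors says nothing about the sign of $\langle X,\nu\rangle$. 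As written, your proof therefore establishes Theorem \ref{thm: alex2} only when $c_2=\cdots=c_k\equiv 0$ (the $c_1$-term is harmless since $H_1H_0=H_1$); the case of the $H_1H_{j-1}$ terms with $j\ge 2$ still requires an argument that does not presuppose $\langle X,\nu\rangle\ge 0$.
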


\begin{remark}
Recently, Brendle's inequality is extended in several ways, for instance, see \cite{LX2016, LX2016b, WW2016, WWZ2014}.
We observe that the proof of \eqref{item 1} in Theorem \ref{first main theorem} works on more general warped product manifold $M^n = {N}^{n-1} \times [0,\bar{r})$, which admits the property that Brendle's inequality holds. For instance, the fiber ${N}^{n-1}$ can be a compact Einstein manifold, as in
Brendle's paper \cite{B2013}.
\end{remark}

We now give another rigidity result which contains as a special case where the ratio of two distinct higher order mean curvatures is a radial function.
\begin{theorem}[$=$ \textbf{Theorem \ref{second main theorem}}]\label{thm: alex3}
Suppose $(M^{n \geq 3}, \bar g)$  is the warped product manifold in Section \ref{Preliminaries} satisfying (H2) and (H4). Let $\Sigma$ be a closed star-shaped $k$-convex ($H_k>0$) hypersurface immersed in $M^n$, $\{a_i(r)\}_{i=1}^{l-1}$ and $\{b_j(r)\}_{j=l}^k$ ($2\le l<k\le n-1$) be a family of monotone decreasing, smooth, non-negative functions and a family of monotone increasing, smooth, non-negative functions respectively (where at least one $ a_i(r) $ and one $ b_j(r) $ are positive). Suppose
$$\sum_{i=1}^{l-1}a_i(r)H_i= \sum_{j=l}^{k} b_j(r) H_j.$$
Then $\Sigma$ is totally umbilical.
\end{theorem}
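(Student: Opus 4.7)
My plan is to sandwich a single integral between two inequalities coming from the weighted Hsiung--Minkowski identity applied separately to the $b_j$- and $a_i$-families, use the hypothesis $\sum a_i H_i = \sum b_j H_j$ to match the $\langle X,\nu\rangle$-integrals, and then invoke the pointwise Newton--Maclaurin inequality (cross-multiplied by $a_i b_j$) to pin the chain to equality. The equality case of Newton--Maclaurin then forces umbilicity.

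First I would apply Proposition~\ref{weighted HS}~\eqref{item: HM} with $\phi = b_j(r)$ for each $j \in \{l,\ldots,k\}$. Using $\xi = h(r)\nabla_\Sigma r$ one has $\langle T_{j-1}\xi,\nabla_\Sigma b_j\rangle = (b_j'/h)\langle T_{j-1}\xi,\xi\rangle \ge 0$, since $b_j' \ge 0$ and $T_{j-1} > 0$ by $k$-convexity (Lemma~\ref{T positive}). The divergence term $(\mathrm{div}_\Sigma T_{j-1})\xi$ is nonnegative by Proposition~\ref{weighted HS}~\eqref{item: div}, invoking (H4) and star-shapedness. Hence $\int_\Sigma b_j(fH_{j-1} - H_j\langle X,\nu\rangle) \le 0$; summing over $j$ and using the hypothesis,
\[
\int_\Sigma f \sum_{j=l}^{k} b_j H_{j-1} \;\le\; \int_\Sigma \langle X,\nu\rangle \sum_j b_j H_j \;=\; \int_\Sigma \langle X,\nu\rangle \sum_i a_i H_i. \qquad (\star)
\]
The parallel application with $\phi = a_i(r)$ is intended to produce
\[
\int_\Sigma f \sum_i a_i H_{i-1} \;\ge\; \int_\Sigma \langle X,\nu\rangle \sum_i a_i H_i, \qquad (\star\star)
\]
exploiting that $a_i' \le 0$.

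Next, the Newton--Maclaurin inequality $H_{i-1} H_j \le H_i H_{j-1}$ for $1 \le i < j \le k$ (Lemma~\ref{T positive}~\eqref{item: b}), cross-multiplied by $a_i b_j \ge 0$ and summed over all pairs $(i,j)$ with $i \le l-1 < l \le j$, yields
\[
\Bigl(\sum_i a_i H_{i-1}\Bigr)\Bigl(\sum_j b_j H_j\Bigr) \;\le\; \Bigl(\sum_i a_i H_i\Bigr)\Bigl(\sum_j b_j H_{j-1}\Bigr).
\]
Dividing both sides by the common positive value $T := \sum_i a_i H_i = \sum_j b_j H_j$, which is positive everywhere because $H_i > 0$ by $k$-convexity and at least one $a_{i_0}, b_{j_0}$ is positive, we obtain $\sum_i a_i H_{i-1} \le \sum_j b_j H_{j-1}$ pointwise; multiplying by $f > 0$ and integrating gives $\int_\Sigma f \sum_i a_i H_{i-1} \le \int_\Sigma f \sum_j b_j H_{j-1}$. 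Chained with $(\star)$ and $(\star\star)$, every inequality collapses to equality, so pointwise $\sum_{i,j} a_i b_j (H_i H_{j-1} - H_{i-1} H_j) = 0$. Since each summand is nonnegative, each vanishes; selecting $i_0 < j_0$ with $a_{i_0}, b_{j_0} > 0$ and invoking the equality case of Lemma~\ref{T positive}~\eqref{item: b} forces $\lambda_1 = \cdots = \lambda_{n-1}$, so $\Sigma$ is totally umbilical.

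The main obstacle is to establish $(\star\star)$ rigorously. With $a_i$ decreasing, the gradient term $-\frac{1}{i\binom{n-1}{i}}\int_\Sigma \langle T_{i-1}\xi,\nabla a_i\rangle \ge 0$ in the HM identity has the favorable sign, but the divergence contribution $-\frac{1}{i\binom{n-1}{i}}\int_\Sigma a_i(\mathrm{div}_\Sigma T_{i-1})\xi \le 0$ opposes it, so no uniform sign on the individual $a_i$-identity is available from the same argument used on the $b_j$-side. Resolving this will likely require either summing the $a_i$-HM identities over $i$ and cancelling the divergence contributions against those coming from the $b_j$-side (which are of the opposite sign only after moving them) via the global constraint $\sum a_i H_i = \sum b_j H_j$, or invoking a refined identity tailored to decreasing radial test functions. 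Once $(\star\star)$ is in hand, the Newton--Maclaurin closing is routine.
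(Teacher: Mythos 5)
Your skeleton is right, and you have correctly diagnosed where it breaks: the inequality $(\star\star)$ is not available, and the proposal never establishes it. On the $a_i$-side the weighted Hsiung--Minkowski identity reads
\[
\int_\Sigma a_i\bigl(fH_{i-1}-H_i\langle X,\nu\rangle\bigr)
+(i-1)\int_\Sigma a_i\sum_{p}A_p\,H_{i-2;p}
=-\tfrac{1}{i\binom{n-1}{i}}\int_\Sigma \langle T_{i-1}\xi,\nabla_\Sigma a_i\rangle\ \ge\ 0,
\]
where $A_p=-\tfrac{1}{(n-1)(n-2)}\xi^p\mathrm{Ric}(e_p,\nu)\ge 0$, so the Ricci term enters with the \emph{unfavorable} sign and $(\star\star)$ does not follow. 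The resolution you gesture at in your last paragraph is indeed the one the paper uses, but it requires an ingredient your proposal does not supply: you must \emph{retain} the Ricci terms on both families (in particular, do not discard the nonnegative divergence term when deriving $(\star)$, as you do --- that forecloses the cancellation), subtract the summed $a_i$-identities from the summed $b_j$-identities, and then show that the combined Ricci contribution
\[
\int_\Sigma \sum_{p}A_p\Bigl(\sum_{j=l}^k (j-1)b_j H_{j-2;p}-\sum_{i=1}^{l-1}(i-1)a_i H_{i-2;p}\Bigr)
\]
is nonnegative. This is where Lemma~\ref{T positive}~\eqref{item: c} is essential: the pointwise inequality $jH_iH_{j-1;p}>iH_jH_{i-1;p}$ for $i<j$, cross-multiplied by $a_ib_j$, summed, and divided by the common positive value $\sum a_iH_i=\sum b_jH_j$ (exactly the same manoeuvre you perform with Newton--Maclaurin for the $fH_{j-1}$ terms), gives the needed sign. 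Without this lemma the two opposing Ricci contributions cannot be compared, and the argument does not close. Once it is in place, the difference of the summed identities yields
\[
0\ \ge\ \int_\Sigma f\Bigl(\sum_j b_jH_{j-1}-\sum_i a_iH_{i-1}\Bigr)+\int_\Sigma\sum_p A_p(\cdots)\ \ge\ 0,
\]
forcing equality in your Newton--Maclaurin step and hence umbilicity, as you describe. So the proposal is a correct outline with one genuinely missing step, and that step is not routine: it is the separate algebraic inequality of Lemma~\ref{T positive}~\eqref{item: c}, proved in the paper via the identity $H_i=\tfrac{i}{m}\lambda H_{i-1;l}+\tfrac{m-i}{m}H_{i;l}$ and the Newton--Maclaurin inequality for the truncated curvature vectors.
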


\begin{proof}
Let $\xi=X^T$ and $A_p=-\frac{1}{(n-1)(n-2)} \xi^p \mathrm{Ric}(e_p, \nu)$. Since we are assuming (H2) and (H4), we
can apply Lemma \ref{T positive} \eqref{item: p convex} and Proposition \ref{weighted HS} to have, for each $i$ and $j$,
\begin{equation}\label{ineq: 0}
\begin{split}
&\int_\Sigma a_i(r) \left( f H_{i-1} - H_i \langle X, \nu\rangle \right)+(i-1)\int_\Sigma a_i(r)\sum_{p=1}^{n-1}A_p H_{i-2;p}\\
=&-\frac{1}{i{{n-1}\choose i}}\int_\Sigma \langle T_{i-1}(\xi), \nabla _\Sigma a_i\rangle
=-\frac{1}{i{{n-1}\choose i}}\int_\Sigma h(r)a_i'(r)\langle T_{i-1}(\nabla _\Sigma r), \nabla _\Sigma r\rangle
\ge 0
\end{split}
\end{equation}
and
\begin{equation}\label{ineq: 1a}
\begin{split}
&\int_\Sigma b_j(r) \left( f H_{j-1} - H_j \langle X, \nu\rangle \right)+(j-1)\int_\Sigma b_j(r)\sum_{p=1}^{n-1}A_p H_{j-2;p}\\
=&-\frac{1}{j{{n-1}\choose j}}\int_\Sigma \langle T_{j-1}(\xi), \nabla _\Sigma b_j\rangle
=-\frac{1}{j{{n-1}\choose j}}\int_\Sigma h(r)b_j'(r)\langle T_{j-1}(\nabla _\Sigma r), \nabla _\Sigma r\rangle
\le 0.
\end{split}
\end{equation}
Summing \eqref{ineq: 0} over $i$ and \eqref{ineq: 1a} over $j$, and then taking the difference gives
\begin{equation}\label{ineq: Hkl}
\begin{split}
0=&\int \left(\sum_{j=l}^k b_j(r) H_j-\sum_{i=1}^{l-1} a_i(r) H_i\right)\langle X, \nu\rangle\\
\ge& \int_\Sigma f \left(\sum_{j=l}^k b_j(r) H_{j-1}-\sum_{i=1}^{l-1} a_i(r) H_{i-1}\right)\\
&+\int_\Sigma \sum_{p=1}^{n-1}A_p\left(\sum_{j=l}^k(j-1)b_j(r) H_{j-2;p}-\sum_{i=1}^{l-1} (i-1)a_i(r) H_{i-2;p}\right).
\end{split}
\end{equation}
Note that $H_j>0$ for $j\le k$ by Lemma \ref{T positive}.
Let $1\le i<j\le k$, then multiplying the Newton's inequality $H_i H_{j-1}\ge H_{i-1}H_j$ by $a_i(r) b_j(r)$ and summing over $i, j$ gives
$$
\sum_{i=1}^{l-1} a_i(r) H_i \sum_{j=l}^k b_j(r) H_{j-1}\ge \sum_{i=1}^{l-1} a_i(r) H_{i-1}\sum_{j=l}^k b_j(r) H_j.
$$
Since $\sum_{i=1}^{l-1}a_i(r)H_i=\sum_{j=l}^{k}b_j(r)H_j>0$, we deduce
\begin{equation}\label{ineq: lin comb}
\sum_{j=l}^k b_j(r)H_{j-1}\ge \sum_{i=1}^{l-1}a_i(r)H_{i-1}.
\end{equation}
Similar to \eqref{ineq: lin comb}, we can obtain from Lemma \ref {T positive} \eqref{item: c} the inequality
\begin{equation}\label{ineq: bH-aH}
\sum_{j=l}^k(j-1)b_j(r) H_{j-2;p}-\sum_{i=1}^{l-1} (i-1)a_i(r) H_{i-2;p} > 0.
\end{equation}
On the other hand, $A_p\ge 0$ by Proposition \ref {weighted HS}. Combining this with \eqref{ineq: bH-aH} and \eqref{ineq: lin comb}, we conclude that all the integrands in \eqref{ineq: Hkl} are zero. This implies \eqref{ineq: lin comb} is an equality and hence $\Sigma$ is totally umbilical by the Newton-Maclaurin inequality.
\end{proof}

Again, following the idea of Theorem \ref{thm: alex3}, we can use the weighted Hsiung-Minkowski integral formulas in the space forms to prove
\begin{theorem}\label{thm: alex4}
Let $\Sigma$ be a closed $k$-convex hypersurface immersed in $M^{n \geq 3}=\mathbb R^n$, $\mathbb H^n$ or $\mathbb S^n_+$ (open hemisphere). Let $r$ be the distance in $M^n$ from a fixed point $p_0\in M$ (chosen to be the center if $M=\mathbb S^n_+$). Let $\{a_i(r)\}_{i=1}^{l-1}$ and $\{b_j(r)\}_{j=l}^k$ ($2\le l<k\le n-1$) be a family of monotone decreasing, smooth, non-negative functions and a family of monotone increasing, smooth, non-negative functions respectively (where at least one $ a_i(r) $ and one $ b_j(r) $ are positive). Suppose
$$\sum_{i=1}^{l-1}a_i(r)H_i= \sum_{j=l}^{k} b_j(r) H_j.$$
Then it is a geodesic hypersphere.
\end{theorem}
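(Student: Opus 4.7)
The plan is to parallel the argument of Theorem \ref{thm: alex3} while exploiting the crucial simplification provided by the constant-curvature ambient: the ambient Ricci tensor is a multiple of the metric, so $\mathrm{Ric}(e_j,\nu)=0$ for every tangent frame $\{e_j\}\subset T\Sigma$. By the formula \eqref{eq: div} in Proposition \ref{weighted HS}, the divergence term $(\mathrm{div}_\Sigma T_{k-1})(\xi)$ then vanishes identically on $\Sigma$, and the quantity $A_p$ appearing in the proof of Theorem \ref{thm: alex3} is zero \emph{without} any sign assumption on $\langle\partial_r,\nu\rangle$. This is precisely what lets us drop the star-shapedness hypothesis. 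Throughout, I write $h(r)$ for the warping function ($h(r)=r$, $\sinh r$, or $\sin r$ respectively), $f=h'(r)$, and $X=h(r)\partial_r$; condition (H2) holds in all three cases, so Lemma \ref{T positive} still applies and yields $T_{i-1}>0$ for $i\le k$.

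Applying the weighted Hsiung-Minkowski identity \eqref{weighted in} with weights $\phi=a_i(r)$ and $\phi=b_j(r)$, and using $\xi=h(r)\nabla_\Sigma r$ together with $\nabla_\Sigma\phi(r)=\phi'(r)\nabla_\Sigma r$ and $T_{i-1}>0$, the monotonicity of the coefficients gives
\begin{equation*}
\int_\Sigma a_i(r)\bigl(fH_{i-1}-H_i\langle X,\nu\rangle\bigr) \ge 0
\quad\text{and}\quad
\int_\Sigma b_j(r)\bigl(fH_{j-1}-H_j\langle X,\nu\rangle\bigr) \le 0.
\end{equation*}
Summing the first inequality over $i\in\{1,\dots,l-1\}$, the second over $j\in\{l,\dots,k\}$, and subtracting, the hypothesis $\sum_i a_i H_i=\sum_j b_j H_j$ cancels the $\langle X,\nu\rangle$ contributions and leaves
\begin{equation*}
\int_\Sigma f\left(\sum_{j=l}^k b_j(r) H_{j-1} - \sum_{i=1}^{l-1} a_i(r) H_{i-1}\right) \le 0.
\end{equation*}

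Exactly as in the proof of Theorem \ref{thm: alex3}, multiplying the Newton-Maclaurin inequality $H_i H_{j-1}\ge H_{i-1}H_j$ by $a_i(r)b_j(r)$ for $1\le i<l\le j\le k$, summing, and dividing by the common positive quantity $\sum_i a_i H_i=\sum_j b_j H_j>0$ yields the reverse pointwise inequality $\sum_j b_j H_{j-1}\ge \sum_i a_i H_{i-1}$. Together with $f>0$, the two bounds force pointwise equality, and unraveling the sum shows $a_i(r)b_j(r)(H_i H_{j-1}-H_{i-1}H_j)=0$ everywhere for every pair $(i,j)$. Choosing $(i_0,j_0)$ with $a_{i_0}>0$ and $b_{j_0}>0$ and invoking Lemma \ref{T positive} \eqref{item: b} forces $\lambda_1=\cdots=\lambda_{n-1}$, so $\Sigma$ is totally umbilical and hence a geodesic sphere in $\mathbb R^n$, $\mathbb H^n$, or $\mathbb S^n_+$. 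The only real difficulty is bookkeeping: verifying that the three space forms all fit uniformly into the warped-product framework of Section \ref{Preliminaries} and that the vanishing of $\mathrm{Ric}(e_j,\nu)$ really disposes of every contribution whose positivity relied on star-shapedness in Theorem \ref{thm: alex3}; once this is checked, the argument is an essentially verbatim transcription.
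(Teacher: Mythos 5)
Your proof is correct and is precisely the argument the paper intends: the paper only remarks that Theorem \ref{thm: alex4} follows ``following the idea of Theorem \ref{thm: alex3}'' via the simpler space-form Hsiung--Minkowski formulas, and your observation that $\mathrm{Ric}(e_j,\nu)=0$ in a space form kills the $(\mathrm{div}_\Sigma T_{k-1})(\xi)$ and $A_p$ terms (and with them the need for star-shapedness) is exactly that simplification. The remaining steps (sign of the $\langle T_{j-1}(\nabla_\Sigma r),\nabla_\Sigma r\rangle$ terms from monotonicity, the summed Newton--Maclaurin inequality, and the equality analysis via Lemma \ref{T positive}\eqref{item: b}) match the paper's proof of Theorem \ref{thm: alex3} verbatim.
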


\begin{remark}\label{rmk: counterexample}
We illustrate that the monotonicity condition on $a_i(r)$ and $b_j(r)$ in Theorem \ref{thm: alex3} and \ref{thm: alex4} cannot be dropped. For simplicity, we begin with the standard circular torus embedded in ${\mathbb{R}}^{3}$ given by
the level set
\[
{\left( \sqrt{\, {x_{1}}^2 + {x_{2}}^2 \,} - R_{1} \right)}^{2} + {x_{3}}^2 = {{R}_{2}}^{2},
\]
where the inner radius $R_{1}$ and outer radius $R_{2}$ satisfy $R_{2} < \frac{R_{1}}{2}$.
The normalized mean curvature function $H_{1}$ depends only on
$r=\sqrt{{x_{1}}^{2} + {x_{2}}^{2} +{x_{3}}^{2}}:$
\[
H_{1} = \frac{{R_{1}}^{2} - {r}^{2}} {{R_{1}}^3 - {R_{2}}^2 R_1 - R_1 r^2},
\]
which is increasing for $\displaystyle r \in \left[ R_{1}-R_{2}, R_{1}+R_{2}\right]$. Likewise, in ${\mathbb{R}}^{4}$, we can construct explicit counterexamples, by considering the hypersurface $\Sigma$ which is homeomorphic to ${\mathbb{S}}^{1} \times {\mathbb{S}}^{2}:$
\[
{\left( \sqrt{\, {x_{1}}^2 + {x_{2}}^2 \,} - R_{1} \right)}^{2} + {x_{3}}^2 +{x_{4}}^2 = {{R}_{2}}^{2}.
\]
When $\Sigma$ is sufficiently thin, in the sense that the inner radius $R_{1}$ and outer radius $R_{2}$ satisfy $R_{2} < \frac{R_{1}}{3}$, we can check that the two positive functions $H_{1}$ and $\frac{H_{2}}{H_{1}}$ depend only on the radial distance $r=\sqrt{{x_{1}}^{2} +\cdots +{x_{4}}^{2}}$, and are increasing for $\displaystyle r \in [\min_{x\in \Sigma}r(x),\max_{x\in \Sigma}r(x)] = \left[ R_{1}-R_{2},R_{1}+ R_{2} \right]$.
\end{remark}

\section{Proof of Theorem \ref{third main theorem}}\label{sec: last}
We consider the \textit{weighted generalized inverse curvature flow} in Euclidean space ${\mathbb{R}}^{n \geq 3}$:
\begin{equation}
\label{gICF now}
\frac{d}{dt} \mathcal{F}
= \sum_{0\le i<j\le n-1}a_{i,j}
{\left( \frac{{H}_{i}}{{H}_{j}} \right)} ^{\frac{1}{j-i}} \nu,
\end{equation}
where the weight functions $\{a_{i,j}(x)\mid {0\le i<j\le n-1}\}$ are non-negative functions on the hypersurface satisfying $\displaystyle \sum_{0\le i<j\le n-1} a_{i,j}(x)=1$. Here, $\nu$ denotes the outward pointing unit normal vector field and ${H}_{j}$ the $j$-th normalized mean curvature function. Let $k=\max\{j\mid a_{i,j}>0\textrm{ for some }i<j\}$ so that $H_k>0$. If $a_{i,j}=1$ and $j-i=1$, the evolution (\ref{gICF now}) is called the inverse curvature flow.

\begin{definition}
We say that a hypersurface $\Sigma$ with ${H}_{k}>0$ is a self-expander to the generalized inverse curvature flow
(\ref{gICF now}) if there exists a constant $\mu>0$ satisfying
\begin{equation} \label{gICF soliton here}
\sum_{0\le i<j\le k}a_{i,j} {\left(\frac{H_i}{H_j} \right)} ^{\frac{1}{j-i}} = \mu \langle X, \nu \rangle.
\end{equation}
\end{definition}
\begin{theorem}[$=$ \textbf{Theorem \ref{third main theorem}}] \label{third main theorem here}
Let $\Sigma$ be a closed hypersurface immersed in ${\mathbb{R}}^{n \geq 3}$. If $\Sigma$ is a self-expander
to the weighted generalized inverse curvature flow, then it is a round hypersphere centered at the origin.
\end{theorem}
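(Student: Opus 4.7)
The plan is to work in $\mathbb{R}^n$, where the conformal field $X$ coincides with the position vector, the potential $f \equiv 1$, and the ambient Ricci vanishes; Proposition \ref{weighted HS} with $\phi \equiv 1$ (the divergence term drops out via \eqref{eq: div}) then reduces to the classical Hsiung-Minkowski identities
$$\int_\Sigma H_{p-1} \, = \, \int_\Sigma H_p \langle X, \nu\rangle, \qquad 1 \le p \le n-1.$$
Since $H_k > 0$, Lemma \ref{T positive}(2) gives $H_p > 0$ for every $0 \le p \le k$, so the right hand side of \eqref{gICF soliton here} is strictly positive and $\langle X, \nu\rangle > 0$ on $\Sigma$ (star-shapedness with respect to the origin is automatic). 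The key pointwise input is a geometric-mean form of Newton-Maclaurin: since $\{H_{p-1}/H_p\}_{p=1}^{k}$ is non-decreasing by Lemma \ref{T positive}(2b),
$$\frac{1}{H_1} \, \le \, \left(\frac{H_i}{H_j}\right)^{\!1/(j-i)} \, = \, \left(\prod_{p=i+1}^{j} \frac{H_{p-1}}{H_p}\right)^{\!1/(j-i)} \, \le \, \frac{H_{k-1}}{H_k}$$
for $0 \le i < j \le k$, with equality in either bound forcing $\lambda_1 = \cdots = \lambda_{n-1}$ at that point.

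I would then test \eqref{gICF soliton here} against the two extremal multipliers $H_1$ and $H_k$. Multiplying by $H_1$, integrating, and using $\sum_{i<j} a_{i,j} \equiv 1$ together with the lower Newton-Maclaurin bound gives
$$\mu \int_\Sigma H_1 \langle X, \nu\rangle \, = \, \int_\Sigma H_1 \sum_{i<j} a_{i,j}\left(\frac{H_i}{H_j}\right)^{\!1/(j-i)} \, \ge \, \int_\Sigma 1 \, = \, |\Sigma|.$$
Hsiung-Minkowski at $p = 1$ reads $\int_\Sigma H_1 \langle X, \nu\rangle = |\Sigma|$, and so $\mu \ge 1$. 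Multiplying \eqref{gICF soliton here} instead by $H_k$ and invoking the upper bound yields the dual estimate
$$\mu \int_\Sigma H_k \langle X, \nu\rangle \, = \, \int_\Sigma H_k \sum_{i<j} a_{i,j}\left(\frac{H_i}{H_j}\right)^{\!1/(j-i)} \, \le \, \int_\Sigma H_{k-1},$$
whose two sides are identified by Hsiung-Minkowski at $p = k$, forcing $\mu \le 1$. Thus $\mu = 1$ and both integral inequalities are pointwise equalities on $\Sigma$. For $k \ge 2$ this collapses the entire Newton-Maclaurin chain and forces $\Sigma$ to be totally umbilical; the degenerate IMCF case $k = 1$, where the two bounds coincide and only $\mu = 1$ is extracted, is the setting of \cite{DLW2015}, whose umbilicity argument uses Hsiung-Minkowski at $p = 2$ together with $H_2 \le H_1^2$ applied to $\langle X, \nu\rangle = 1/H_1$.

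To identify the centre, plug the resulting sphere back into \eqref{gICF soliton here}: on a sphere of radius $R$ centred at $p$ every $H_i \equiv R^{-i}$, so $(H_i/H_j)^{1/(j-i)} \equiv 1/R$ and the left hand side is identically $1/R$, while the right hand side equals $\mu \langle x, \nu\rangle = R + \langle p, \nu\rangle$. Letting $\nu$ range over the unit sphere forces $\langle p, \nu\rangle$ to be constant in $\nu$, whence $p = 0$. The main conceptual hurdle is recognising that $1/H_1$ and $H_{k-1}/H_k$ are exactly the extremes of the Newton-Maclaurin chain governing $(H_i/H_j)^{1/(j-i)}$ and that they pair naturally against $H_1$ and $H_k$ through Hsiung-Minkowski to produce the matching integrands $1$ and $H_{k-1}$; once this pairing is spotted, the sandwich forcing $\mu = 1$ and umbilicity is essentially bookkeeping.
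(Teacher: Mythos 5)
Your proposal is correct and follows essentially the same route as the paper: the same Newton--Maclaurin sandwich $1/H_1 \le (H_i/H_j)^{1/(j-i)} \le H_{k-1}/H_k$, paired against $H_1$ and $H_k$ via the Hsiung--Minkowski identities to force $\mu = 1$ and pointwise equality, with the degenerate $k=1$ case closed by $H_2\langle X,\nu\rangle = H_2/H_1 \le H_1$ and Hsiung--Minkowski at the next level, exactly as in the paper. The only cosmetic difference is that the paper first records the pointwise bounds $H_0/H_1 \le \mu\langle X,\nu\rangle \le H_{k-1}/H_k$ and then integrates, whereas you multiply the soliton equation by $H_1$ or $H_k$ before integrating; you also spell out the centering argument that the paper leaves as an easy remark.
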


\begin{proof} Let $\mathbf{p}= \langle X, \nu \rangle$ denote the support function on $\Sigma$.
We shall repeatedly use the classical Hsiung--Minkowski integral formulas \cite{Hs1954, Hs1956}
\begin{equation}\label{eq: HM}
\int_\Sigma H_j=\int_\Sigma H_{j+1}\mathbf{p}
\end{equation}
for hypersurfaces in Euclidean space.
By the $k$-convexity assumption $H_{k}>0$, Lemma \ref{T positive} (\ref{item: p convex}) guarantees that $H_{j}>0$ for $j \in \{0, \cdots, k \}$ and so $\mathbf{p}>0$ by \eqref{gICF soliton here}.

Assume first that $k\ge2$.
By Lemma \ref{T positive} (\ref{item: b}), we have for $0\le i<j\le k$,
$$ \frac{H_i}{H_{j-1}} =\prod_{m=i}^{j-2} \frac{H_m}{H_{m+1}} \le \prod_{m=i}^{j-2} \frac{H_{j-1}}{H_j} = {\left(\frac{H_{j-1}}{H_j}\right)}^{j-i-1} $$
and
$$ \frac{H_i}{H_j} =\prod_{m=i}^{j-1} \frac{H_m}{H_{m+1}} \ge\prod_{m=i}^{j-1} \frac{1}{H_1} = \frac{1}{{H_1}^{j-i}}. $$
It follows that
\begin{equation}\label{ineq: newton}
\left({\frac{H_i}{H_j}}\right)^{\frac{1}{j-i}} \leq \frac{H_{j-1}}{H_j}\;\; {and} \;\; {\left( \frac{H_i}{H_j} \right)} ^{\frac{1}{j-i}} \ge \frac{1}{H_1}=\frac{H_0}{H_1}.
\end{equation}
Therefore, by Lemma \ref{T positive} (\ref{item: b}) again,
\begin{equation}\label{ineq: newton1}
\mu\mathbf{p}= \sum_{i<j}a_{i,j} {\left(\frac{H_i}{H_j} \right)} ^{\frac{1}{j-i}}
\le \sum_{i<j}a_{i,j} \frac{H_{j-1}}{H_j}
\le \sum_{i<j}a_{i,j} \frac{H_{k-1}}{H_k}
=\frac{H_{k-1}}{H_k}
\end{equation}
and
\begin{equation}\label{ineq: newton2}
\mu\mathbf{p}= \sum_{i<j}a_{i,j} {\left(\frac{H_i}{H_j} \right)} ^{\frac{1}{j-i}}
\ge \sum_{i<j}a_{i,j} \frac{H_0}{H_1}
=\frac{H_0}{H_1}.
\end{equation}
The inequality \eqref{ineq: newton1} implies
$$\mu\int_\Sigma H_k \mathbf{p}\le \int_\Sigma H_{k-1},$$
which in turn implies $\mu \le 1$ by the Hsiung--Minkowski formula \eqref{eq: HM}.

On the other hand, \eqref{ineq: newton2} implies
$$\mu\int_\Sigma H_1\mathbf{p}\ge \int_\Sigma H_0$$
and hence $\mu \ge 1$ again by the Hsiung--Minkowski formula \eqref{eq: HM}.

We conclude that $\mu=1$ and all the inequalities in \eqref{ineq: newton} are all equalities. Therefore $\Sigma$ is umbilical and so is a round hypersphere, which is easily seen to be centered at the origin.

When $k=1$, \eqref{ineq: newton2} becomes an equality and hence $\mu=1$ by \eqref{eq: HM}. By the Newton-Maclaurin inequality, we have
\begin{equation}\label{ineq: newton3}
H_2 \mathbf{p}=\frac{H_2}{H_1}\le \frac{H_1}{H_0}= H_1.
\end{equation}
Integrating this inequality and comparing to
the Hsiung-Minkowski formula
\[
\int_\Sigma H_1 =\int_\Sigma H_2 \mathbf{p},
\]
we again deduce that \eqref{ineq: newton3} is an equality and hence $\Sigma$ is round.
\end{proof}

\textbf{Acknowledgements.} Part of our work was done while the three authors were visiting the National Center for Theoretical Sciences in Taipei, Taiwan in November 2015. We would like to thank the NCTS for their support and warm hospitality during our visit. We also would like to thank Yong Wei for an improvement of Theorem \ref{third main theorem} in an earlier version of this paper. The research of K.-K. Kwong is partially supported by Ministry of Science and Technology in Taiwan under grant MOST103-2115-M-006-016-MY3. J. Pyo is partially supported by the National Research Foundation of Korea (NRF-2015R1C1A1A02036514).

\bibliographystyle{Plain}

\end{document}